\numberwithin{equation}{section}
\theoremstyle{definition}
\numberwithin{equation}{section}
\newcommand{\ncom}{\newcommand}
\ncom{\beq}{\begin{equation}}
\ncom{\eeq}{\end{equation}}
\ncom{\bea}{\begin{eqnarray*}}
\ncom{\eea}{\end{eqnarray*}}
\ncom{\beqa}{\begin{eqnarray}}
\ncom{\eeqa}{\end{eqnarray}}
\ncom{\nno}{\nonumber}
\ncom{\non}{\nonumber}
\ncom{\ds}{\displaystyle}
\ncom{\half}{\frac{1}{2}}
\ncom{\mbx}{\makebox{.25cm}}
\ncom{\hs}{\mbox{\hspace{.25cm}}}
\ncom{\rar}{\rightarrow}
\ncom{\Rar}{\Rightarrow}
\ncom{\noin}{\noindent}
\ncom{\bc}{\begin{center}}
\ncom{\ec}{\end{center}}
\ncom{\sz}{\scriptsize}
\ncom{\rf}{\ref}
\ncom{\s}{\sqrt{2}}
\ncom{\sgm}{\sigma}
\ncom{\Sgm}{\Sigma}
\ncom{\psgm}{\sigma^{\prime}}
\ncom{\dt}{\delta}
\ncom{\Dt}{\Delta}
\ncom{\lmd}{\lambda}
\ncom{\Lmd}{\Lambda}
\ncom{\Th}{\Theta}
\ncom{\e}{\eta}
\ncom{\eps}{\epsilon}
\ncom{\pcc}{\stackrel{P}{>}}
\ncom{\lp}{\stackrel{L_{p}}{>}}
\ncom{\dist}{{\rm\,dist}}
\ncom{\sspan}{{\rm\,span}}
\ncom{\re}{{\rm Re\,}}
\ncom{\im}{{\rm Im\,}}
\ncom{\sgn}{{\rm sgn\,}}
\ncom{\ba}{\begin{array}}
\ncom{\ea}{\end{array}}
\ncom{\hone}{\mbox{\hspace{1em}}}
\ncom{\htwo}{\mbox{\hspace{2em}}}
\ncom{\hthree}{\mbox{\hspace{3em}}}
\ncom{\hfour}{\mbox{\hspace{4em}}}
\ncom{\vone}{\vskip 2ex}
\ncom{\vtwo}{\vskip 4ex}
\ncom{\vonee}{\vskip 1.5ex}
\ncom{\vthree}{\vskip 6ex}
\ncom{\vfour}{\vspace*{8ex}}
\ncom{\norm}{\|\;\;\|}
\ncom{\integ}[4]{\int_{#1}^{#2}\,{#3}\,d{#4}}
\ncom{\vspan}[1]{{{\rm\,span}\{ #1 \}}}
\ncom{\dm}[1]{ {\displaystyle{#1} } }
\ncom{\ri}[1]{{#1} \index{#1}}
\newtheorem{theorem}{\bf Theorem}[section]
\newtheorem{remark}{\bf Remark}[section]
\newtheorem{proposition}{Proposition}[section]
\newtheoremstyle
    {remarkstyle}
    {}
    {11pt}
    {}
    {}
    {\bfseries}
    {:}
    {     }
    {\thmname{#1} \thmnumber{#2} }
\theoremstyle{remarkstyle}
\def\k{\kappa}
\def\eps{\varepsilon}
\begin{document}
\title{Multivariate generalized counting process via gamma subordination}

\author[Manisha Dhillon]{Manisha Dhillon}
\address{Manisha Dhillon, Department of Mathematics, Indian Institute of Technology Bhilai, Durg, 491002, India.}
\email{manishadh@iitbhilai.ac.in}
\author[Kuldeep Kumar Kataria]{Kuldeep Kumar Kataria}
\address{Kuldeep Kumar Kataria, Department of Mathematics, Indian Institute of Technology Bhilai, Durg, 491002, India.}
\email{kuldeepk@iitbhilai.ac.in}
\author[Shyan Ghosh]{Shyan Ghosh}
\address{Shyan Ghosh, Department of Mathematics, Indian Institute of Technology Bhilai, Durg, 491002, India.}
\email{shyanghosh@iitbhilai.ac.in}
\subjclass[2020]{Primary: 60G22, 60G51; Secondary: 60G20, 60G55}
\keywords{multivariate generalized counting process; L\'evy process; multivariate subordinator; gamma subordinator}

\date{\today}

\begin{abstract}
In this paper, we study a multivariate gamma subordinator whose components are independent gamma processes subject to a random time governed by an independent negative binomial process. We derive the explicit expressions for its joint Laplace–Stieltjes transform, its probability density function and the associated governing differential equations. Also, we study a time-changed variant of the multivariate generalized counting process where the time is changed by an independent multivariate gamma subordinator. For this time-changed process, we obtain the corresponding Lévy measure and probability mass function. Later, we discuss an application of the time-changed multivariate generalized counting process to a shock model.
\end{abstract}

\maketitle

\section{Introduction}
Di Crescenzo {\it{et al.}} (2016) intoduced and studied a generalization of the Poisson process by considering the possibility of finitely many arrivals in an infinitesimal time interval. It is known as the generalized counting process (GCP), and we denote it by $\{M(t)\}_{t\ge 0}$. It is a L\'evy process which performs independent jumps of size $1,2,\dots,k$ with positive rates $\lambda_1,\lambda_2,\dots,\lambda_k$, respectively. Its state probabilities are given by (see Di Crescenzo {\it{et al.}} (2016))
\begin{equation*}
p(n,t)=\mathrm{Pr}\{M(t)=n\}=\sum_{\Omega(k,n)}\prod_{j=1}^{k} \frac{(\lambda_{j}t)^{n_j}}{n_j!}e^{-\lambda_j t}, \ n\ge 0,
\end{equation*}
where $\Omega(k,n)=\big\{(n_1,n_2,\dots,n_k): \sum_{j=1}^{k}jn_j=n,\, n_j\in\mathbb{N}_0\big\}$.
Also, its L\'evy measure and  probability generating function (pgf) are given by (see Kataria and Khandakar (2022a))
\begin{equation*}
\Pi_{M(t)}(\mathrm{d}x)=\sum_{j=1}^{k}\lambda_j \delta_j(\mathrm{d}x),
\end{equation*}
and
\begin{equation}\label{pgf_gcp}
G_{M(t)}(u)=\mathbb{E}(u^{M(t)})=\exp\Big(-t\sum_{j=1}^{k}\lambda_j (1-u^j)\Big),\ |u|\le 1,
\end{equation}
respectively, where $\delta_j$'s denote the Dirac measures. Also, Di Crescenzo {\it{et al.}} (2016)  studied a time-changed variant of the GCP where an independent inverse stable subordinator is utilized for the random time change. Such time-changed processes are of broad interest in statistical physics as they are closely related to anomalous diffusion phenomena (see Beghin \textit{et al.} (2020)). As the time-changed processes exhibit long-range dependence property, capturing the long-term memory effects, they have several applications in areas, such as finance, risk theory, and internet data traffic modelling. Two mainly studied time-changed variants of the GCP are space fractional and time fractional GCPs which are obtained by considering an independent stable subordinator and its inverse as the time changing components, respectively. For more details on GCP and its time-changed variants, we refer the reader to Kataria and Khandakar (2022a), Buchak and Sakhno (2024) and Kataria {\it et al.} (2025). Moreover,
for martingale characterizations of the GCP and its time-changed variants, we refer the reader to Dhillon and Kataria (2024).

In recent past, the subordination of L\'evy processes has been extensively studied (see Bertoin (1996), (1999), and Sato (1999)). In Sato (1999, Chapter 6), the classical framework of multivariate subordination is discussed. Beghin and Macci (2016) studied a multivariate version of the time-changed Poisson process. An application of multivariate subordination to financial models is given by Semeraro (2022). Cha and Giorgio (2018) introduced a class of bivariate counting processes with marginal regularity property that have applications to certain shock models. Moreover, an application of the bivariate  Poisson process time-changed by an independent stable subordinator to the competing risks model is given by Di Crescenzo and Meoli (2022). Barndorff-Nielsen {\it et al.} (2001) considered $\bar{T}(t)=(T_1(t), T_2(t),\dots,T_d(t))$ as a $d$-dimensional subordinator, and introduced a multivariate subordinated process on $\mathbb{R}^d$ as follows: 
\begin{equation*}
 \bar{Y}(t)=(X_1(T_1(t)),X_2(T_2(t)),\dots,X_d(T_d(t))),
\end{equation*}
where the multivariate processes $\bar{X}(t)=(X_1(t),X_2(t),\dots,X_d(t))$ and $\bar{T}(t)=(T_1(t),T_2(t)$, $\dots,T_d(t))$, $t\ge0$ are assumed to be independent. Beghin \textit{et al.} (2020) studied multivariate inverse subordinators where the component processes are right-continuous hitting time of the component processes $\{T_i(t)\}_{t\ge0}$. Recently, Meoli (2025) studied a bivariate Poisson process subordinated by an independent bivariate gamma process with conditional independent component processes, where an application of the obtained results to a shock model is discussed. 

The outline of this paper is as follows:

First, we study a multivariate subordinator and then use it as a time changing component to study a time-changed variant of the multivariate GCP. In Section 2, we set some notations and recall some preliminary results which will be used throughout the paper. In Section 3, we study a multivariate gamma subordinator in which the component processes are conditionally independent given a negative binomial process. For this multivariate subordinator, we give the explicit expressions for the joint Laplace-Stieltjes transform of its component processes, and obtain its probability density function (pdf) and derive the governing differential equations of its pdf. Also, we obtain the covariance and codifference of its component processes. In Section 4, we study a time-changed variant of the multivariate GCP by time changing it by an independent multivariate subordinator. Several distributional properties of the resulting time-changed multivariate GCP are derived that includes the explicit expressions for its state probabilities, pgf, their governing system of differential equations as well as the covariance and codifference of the component processes. Later, we discuss an application of this time-changed variant to a shock model. For some particular cases, we obtain the survival function of system's failure time.
\section{Preliminaries}
In this section, we first introduce the notation that will be used throughout the paper. We then recall several definitions and established results related to special functions, the gamma process, and the multivariate GCP, \textit{etc.} These preliminaries will be used in the subsequent sections.

Throughout the paper, we will use the following notations: Let $\mathbb{N}=\{1,2,3,\dots\}$ denote the set of positive integers, $\mathbb{N}_{0}=\{0,1,2,\dots\}$ be the set of non-negative integers, $\mathbb{N}_{0}^q=\mathbb{N}_{0}\times\mathbb{N}_{0}\times\dots\times\mathbb{N}_{0}$ ($q$ copies), $\mathbb{I}_{A}$ be the indicator function on set $A$,   $\bar{n}=(n_1,n_2,\dots,n_q)$, $\bar{0}=(0,0,\dots,0)$ be $q$-tuple zero vector and $\omega=\sqrt{-1}$. By $\bar{n}\ge \bar{m}$ ($\bar{n}> \bar{m}$), we mean $n_i\ge m_i$ ($n_i> m_i$) for all $i\in \{1,2,\dots,q\}$. Also, $\bar{n}\succ \bar{m}$ denotes that $n_i\ge m_i$ for all $i\in\{1,2,\dots,q\}$ and $\bar{n}\neq \bar{m}$. Also, for $i\ge1$,  $\Omega(k_i,n_i)=\big\{(n_{i1}, n_{i2}, \dots, n_{ik_i}):\sum_{j_i=1}^{k_i}j_in_{ij_i}=n_i,\, n_{ij_i}\in \mathbb{N}_{0}\big\}$.

\subsection{Generalized hypergeometric function}
It is defined as follows (see Kilbas \textit{et al.} (2006), Eq. (1.6.28)):
\begin{equation*}
	\begin{aligned}\label{GenHyp}
		_pF_q(a_1,a_2,\dots,a_p; b_1,b_2,\dots,b_q;z)\coloneqq\sum_{k=0}^{\infty}\frac{(a_1)_k(a_2)_k\dots(a_p)_k}{(b_1)_k(b_2)_k\dots(b_q)_k}\frac{z^k}{k!},\, z\in\mathbb{C},\ |z|<1,
	\end{aligned}
\end{equation*}
where $(x)_k$ denotes the Pochhammer symbol, that is,
\begin{equation}\label{pochha}
		(x)_k =\begin{cases*}
			x(x+1)\dots(x+k-1), \  k\ge1,\\
			1, \  k=0. 
		\end{cases*}
\end{equation}

\subsection{Gamma process}
The pdf of gamma process $\{Z(t)\}_{t\ge 0}$ is given by
\begin{equation}\label{gamma}
	f(x,t)=\frac{a^{\lambda t}}{\Gamma(\lambda t)}x^{\lambda t-1}e^{-ax},\ x>0. 
\end{equation}
That is, its marginals $Z(t)$ are gamma distributed with shape parameter $\lambda t$, $\lambda>0$ and rate parameter $a > 0$. We write $Z(t)\sim \Gamma(\lambda t,a)$.
Thus, its mean and variance are given by
\begin{equation}\label{mean_gamma}
\mathbb{E}(Z(t))=\frac{\lambda t}{a}\ \ \text{and}\ \ \operatorname{Var}(Z(t))=\frac{\lambda t}{a^2},
\end{equation}
respectively. 
Its Laplace transform is given by (see Applebaum (2009), p. 55)
\begin{equation}\label{LT_gamma}
	\mathbb{E}(e^{-uZ(t)})=\Big(1+\frac{u}{a}\Big)^{-\lambda t},\ u\ge 0.
\end{equation}

\subsection{Negative binomial process}
The probability mass function (pmf) of negative binomial process $\{B^-(t)\}_{t\ge 0}$ with parameter $0<\theta<1$ is given by (see Barndorff-Nielsen (2001), Example 2.1)
\begin{equation}\label{negbin}
	\mathrm{Pr}\{B^-(t)=n\}= (1-\theta)^t{n+t-1 \choose n}\theta^n,\ n\in\mathbb{N}_0.
\end{equation}
Thus, its mean and variance are
\begin{equation}\label{meanvarnegbin}
\mathbb{E}(B^-(t))=\frac{\theta t}{1-\theta}\ \ \text{and}\ \ \operatorname{Var}(B^-(t))=\frac{\theta t}{(1-\theta)^2},
\end{equation}
respectively. 
Also, its pgf and Laplace transform are given by
\begin{equation}\label{pgf_negbin}
	G_{B^-(t)}(u)=\mathbb{E}(u^{B^-(t)})=\Big(\frac{1-\theta}{1-\theta u}\Big)^t,\ |u|<\frac{1}{1-\theta}
\end{equation}
and
\begin{equation*}\label{LT_negbin}
	\mathbb{E}(e^{-uB^-(t)})=\Big(\frac{1-\theta}{1-\theta e^{-u}}\Big)^t,\ u\ge 0,
\end{equation*}
respectively.
\subsection{L\'evy-Khintchine formula}
The characteristic function of a L\'evy process $\{X(t)\}_{t\ge 0}$ is given by the following L\'evy-Khintchine formula (see Applebaum (2009), p. 45):
\begin{equation}\label{Lev-Khint}
	\mathbb{E}\big(e^{\omega u X(t)}\big)=\exp\Big(t\Big(\omega b u-\frac{1}{2}\sigma^2 u^2+\int_{\mathbb{R}\setminus\{0\}}(e^{\omega ux}-1-\omega ux \mathbb{I}_{\{|x|<1\}})\,\Pi_{X(t)}(\mathrm{d}x)\Big)\Big),
\end{equation}
where $b\in\mathbb{R}$, $\sigma\in\mathbb{R}$ and $\Pi_{X(t)}(\cdot)$ is the L\'evy measure of $\{X(t)\}_{t\ge 0}$ concentrated on $\mathbb{R}\setminus\{0\}$ which satisfies $\int_{\mathbb{R}\setminus\{0\}}\min\{1,x^2\}\,\Pi_{X(t)}(\mathrm{d}x)<\infty$.
\subsection{Codifference of random variables} The codifference of two random variables $X$ and $Y$ is defined as follows (see Kokoszka and Taqqu (1996), Eq. (1.7)):
\begin{equation}\label{codiff}
\tau(X,Y)\coloneq\ln\mathbb{E}(e^{\omega(X-Y)})-\ln\mathbb{E}(e^{\omega X})-\ln\mathbb{E}(e^{-\omega Y}).
\end{equation}

\subsection{Multivariate GCP}
For $i=1,2,\dots,q$, let $\{M_i(t)\}_{t\ge0}$'s be $q$ independent GCPs which perform independently $k_i$ kinds of jumps of size $j_i$ with positive rate $\lambda_{ij_i}$, $j_i=1, 2, \dots, k_i$. Then, a multivariate GCP $\{\bar{M}(t)\}_{t\ge0}$ is defined as (see Kataria and Dhillon (2025), Section 3)
\begin{equation*}\label{mvgcp}
\bar{M}(t)\coloneqq (M_1(t), M_2(t),\dots,M_q(t)),\, t\ge0. 
\end{equation*}

For $\bar{n}\geq \bar{0}$, its state probabilities $p(\bar{n},t)=\mathrm{Pr}\{\bar{M}(t)=\bar{n}\}$ are given by (see Kataria and Dhillon (2025), Eq. (3.7))
\begin{equation}\label{jopmfgcp}
p(\bar{n},t)=\prod_{i=1}^{q}\sum_{\Omega(k_i,n_i)}\prod_{j_i=1}^{k_i}\frac{(\lambda_{ij_i}t)^{n_{ij_i}}}{n_{ij_i}!}e^{-\lambda_{ij_i} t}.
\end{equation}

\section{Multivariate Gamma Subordinator}
Let $\{Z_1(t)\}_{t\ge0}$, $\{Z_2(t)\}_{t\ge0}$, $\dots$,    $\{Z_q(t)\}_{t\ge0}$ be independent gamma L\'evy processes with marginal distribution $\Gamma(\lambda t, a_1)$, $\Gamma(\lambda t, a_2)$, $\dots$, $\Gamma(\lambda t, a_q)$, respectively. Here, $\lambda t>0$ is the shape parameter and $a_i>0$ are the rate parameters. Also, let $\{B^-(\lambda t)\}_{t\ge 0}$ be a negative binomial L\'evy process with parameter $0<\theta<1$. Then, for $i=1,2,\dots,q$, we consider the following time-changed processes: 
\begin{equation}\label{multi_gamma}
G_i(t)= Z_i\bigl(t + \lambda^{-1} B^{-}(\lambda t)\bigr), \ t\ge0,
\end{equation}
where $\{B^{-}(\lambda t)\}_{t\ge 0}$ is independent of $\{Z_i(t)\}_{t\ge0}$.

Note that $\{G_i(t)\}_{t\ge0}$ is a gamma process with distribution $\Gamma(\lambda t, (1 - \theta) a_i)$ (see Barndorff-Nielsen \textit{et al.} (2001), Example 2.2). We define a multivariate gamma subordinator as follows:
\begin{equation}\label{gamma_sub}
\bar{G}(t) \coloneqq (G_1(t), G_2(t),\dots,G_q(t)),
\end{equation}
 where the component processes $\{G_i(t)\}_{t\ge 0}$, $i=1,2,\dots,q$ are conditionally independent given $\{B^{-}(\lambda t)\}_{t\ge 0}$.
\begin{proposition}
The pdf of $\{\bar{G}(t)\}_{t\ge 0}$ is given by 
\small{\begin{equation}\label{gamma_density}
g(\bar{x},t)= \frac{(1-\theta)^{\lambda t}}{\Gamma(\lambda t)}\sum_{n=0}^{\infty}\frac{\theta^n}{n!(\Gamma(n+\lambda t))^{q-1}}\prod_{i=1}^{q}\frac{(a_ix_i)^{n+\lambda t}}{x_i}e^{-a_ix_i},\ \bar{x}>\bar{0}, \ t\ge0,
	\end{equation}}
where $a_i> 0$ and $0<\theta<1$.
\end{proposition}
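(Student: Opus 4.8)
The plan is to compute the joint density by conditioning on the value of the negative binomial process $\{B^-(\lambda t)\}_{t\ge 0}$ and then using the conditional independence of the components $\{G_i(t)\}_{t\ge0}$ that is built into the definition \eqref{gamma_sub}.

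First I would record the elementary observation that drives the argument: since each $\{Z_i(t)\}_{t\ge0}$ is a gamma Lévy process with $Z_i(s)\sim\Gamma(\lambda s,a_i)$, on the event $\{B^-(\lambda t)=n\}$ the random time in \eqref{multi_gamma} equals the deterministic value $t+\lambda^{-1}n$, so $G_i(t)=Z_i(t+\lambda^{-1}n)$ is distributed as $\Gamma\bigl(\lambda t+n,\,a_i\bigr)$; by \eqref{gamma} its conditional density on $(0,\infty)$ is $x_i\mapsto \dfrac{a_i^{\lambda t+n}}{\Gamma(\lambda t+n)}\,x_i^{\lambda t+n-1}e^{-a_i x_i}$. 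Using the conditional independence, the conditional joint density of $(G_1(t),\dots,G_q(t))$ given $\{B^-(\lambda t)=n\}$ is the product of these $q$ densities.

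Next I would apply the law of total probability to get
\[
g(\bar x,t)=\sum_{n=0}^{\infty}\mathrm{Pr}\{B^-(\lambda t)=n\}\prod_{i=1}^{q}\frac{a_i^{\lambda t+n}}{\Gamma(\lambda t+n)}\,x_i^{\lambda t+n-1}e^{-a_i x_i},
\]
substitute the negative binomial pmf \eqref{negbin} with $t$ replaced by $\lambda t$, and rewrite $\binom{n+\lambda t-1}{n}=\dfrac{\Gamma(n+\lambda t)}{\Gamma(\lambda t)\,n!}$. Collecting Gamma factors, the single $\Gamma(n+\lambda t)$ from the pmf cancels one of the $q$ copies of $\Gamma(\lambda t+n)^{-1}$ coming from the product of gamma densities, leaving $\Gamma(n+\lambda t)^{-(q-1)}$; writing $a_i^{\lambda t+n}x_i^{\lambda t+n-1}=(a_i x_i)^{n+\lambda t}/x_i$ and pulling the common factor $(1-\theta)^{\lambda t}/\Gamma(\lambda t)$ out of the sum then produces exactly \eqref{gamma_density}.

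I do not expect a real obstacle here. There is no sum–integral interchange to worry about, since the marginalisation is over a discrete variable and every term is nonnegative; the series converges for each $\bar x>\bar{0}$, and integrating the right-hand side over $(0,\infty)^q$ returns $1$. The only thing requiring care is the Gamma-function bookkeeping — in particular matching the shape parameter $\lambda(t+\lambda^{-1}n)=\lambda t+n$ and tracking how the exponents $q$ and $q-1$ appear. As a sanity check I would set $q=1$, use $\sum_{n\ge0}\theta^n(a_1x_1)^n/n!=e^{\theta a_1 x_1}$, and confirm that the series collapses to the marginal law $\Gamma(\lambda t,(1-\theta)a_1)$ of $\{G_1(t)\}_{t\ge0}$ stated just after \eqref{multi_gamma}.
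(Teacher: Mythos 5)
Your proposal is correct and follows essentially the same route as the paper's own proof: condition on $\{B^-(\lambda t)=n\}$, use the conditional independence to write the joint conditional density as a product of $\Gamma(\lambda t+n,a_i)$ densities, substitute the negative binomial pmf, and rewrite $\binom{n+\lambda t-1}{n}=\Gamma(n+\lambda t)/(\Gamma(\lambda t)\,n!)$ to obtain \eqref{gamma_density}. The Gamma-function bookkeeping and the $q=1$ sanity check you describe are exactly right.
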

\begin{proof}
Let $f_i(x_i,t)$ be the marginal pdf of gamma process $\{Z_i(t)\}_{t\ge 0}$. Then, from \eqref{gamma_sub}, we have
\begin{align*}
		g(\bar{x},t)&=\sum_{n=0}^{\infty}\mathrm{Pr}\{B^-(\lambda t)=n\}\prod_{i=1}^{q}f_i(x_i,t+\lambda^{-1}n)\\
		&=\sum_{n=0}^{\infty}(1-\theta)^{\lambda t}{n+\lambda t-1 \choose n}\theta^n\prod_{i=1}^{q}\frac{a_i^{n+\lambda t}}{\Gamma(n+\lambda t)}x_i^{n+\lambda t-1}e^{-a_ix_i},\ \text{(using \eqref{gamma} and \eqref{negbin})}\\
		&=(1-\theta)^{\lambda t}\Big(\prod_{i=1}^{q}\frac{e^{-a_ix_i}(a_ix_i)^{\lambda t}}{x_i}\Big)\sum_{n=0}^{\infty}{n+\lambda t-1 \choose n}\frac{\theta^n}{(\Gamma(n+\lambda t))^q}\prod_{i=1}^{q}(a_ix_i)^{n}.
	\end{align*}
 This completes the proof.
\end{proof}

\begin{remark}
On substituting $q=2$ and $a_1=a_2=a$ in \eqref{gamma_density}, we get the pdf of bivariate gamma subordinator (see Barndorff-Nielsen {\it et al.} (2001), Example 2.2). Also, for $q=1$, the pdf in \eqref{gamma_density} reduces to that of gamma subordinator (see Applebaum (2009), p. 54).
\end{remark}

\begin{proposition}
Let $\mathbb{I}_{A}$ be the indicator function on set $A$. Then, for any $i=1$, $2$, $\dots$, $q$ and $j=1$, $2$, $\dots$, $q$, the covariance of $\{G_i(t)\}_{t\ge 0}$ and $\{G_j(t)\}_{t\ge 0}$ is given by
\begin{equation*}
\operatorname{Cov}(G_i(t),G_j(t))=\frac{\lambda t}{a_ia_j(1-\theta)^2}(\theta \mathbb{I}_{\{i\ne j\}}+\mathbb{I}_{\{i=j\}}).
\end{equation*}
\end{proposition}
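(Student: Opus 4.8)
The plan is to compute the covariance by conditioning on the negative binomial process $N:=B^{-}(\lambda t)$ and applying the law of total covariance. First I would dispose of the diagonal case $i=j$ directly: since $\{G_i(t)\}_{t\ge 0}$ is a gamma process with distribution $\Gamma(\lambda t,(1-\theta)a_i)$, the variance formula in \eqref{mean_gamma} immediately yields $\operatorname{Var}(G_i(t))=\lambda t/((1-\theta)a_i)^2=\lambda t/(a_i^2(1-\theta)^2)$, which is exactly the asserted value when $\mathbb{I}_{\{i=j\}}=1$ and $\mathbb{I}_{\{i\ne j\}}=0$.

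For the off-diagonal case $i\ne j$, the key structural facts are that the component processes $\{G_i(t)\}$ and $\{G_j(t)\}$ are conditionally independent given $\{B^{-}(\lambda t)\}$, and that conditionally on $B^{-}(\lambda t)=n$ the variable $G_i(t)=Z_i(t+\lambda^{-1}n)$ is $\Gamma(\lambda t+n,a_i)$-distributed (because $\lambda(t+\lambda^{-1}n)=\lambda t+n$), so that $\mathbb{E}(G_i(t)\mid B^{-}(\lambda t))=(\lambda t+B^{-}(\lambda t))/a_i$ by \eqref{mean_gamma}. Then the law of total covariance
\begin{equation*}
\operatorname{Cov}(G_i(t),G_j(t))=\mathbb{E}\big(\operatorname{Cov}(G_i(t),G_j(t)\mid B^{-}(\lambda t))\big)+\operatorname{Cov}\big(\mathbb{E}(G_i(t)\mid B^{-}(\lambda t)),\mathbb{E}(G_j(t)\mid B^{-}(\lambda t))\big)
\end{equation*}
makes the first term vanish by conditional independence and reduces the second term to $(a_ia_j)^{-1}\operatorname{Var}(B^{-}(\lambda t))$. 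Substituting $\operatorname{Var}(B^{-}(\lambda t))=\theta\lambda t/(1-\theta)^2$ from \eqref{meanvarnegbin} gives $\operatorname{Cov}(G_i(t),G_j(t))=\theta\lambda t/(a_ia_j(1-\theta)^2)$, which matches the claimed formula when $\mathbb{I}_{\{i\ne j\}}=1$. Finally I would combine the two cases into the single displayed expression using the indicator notation.

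There is no real obstacle here; the only point deserving care is the justification that conditioning on $\{B^{-}(\lambda t)\}$ renders the $G_i(t)$ independent gamma variables with shape $\lambda t+B^{-}(\lambda t)$ — this follows from the definition \eqref{multi_gamma}–\eqref{gamma_sub} together with the stationary and independent increments of each Lévy process $\{Z_i(t)\}$, precisely the facts already used to derive the density \eqref{gamma_density}. Everything else is a routine application of \eqref{mean_gamma} and \eqref{meanvarnegbin}.
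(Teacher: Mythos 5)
Your proof is correct and rests on the same idea as the paper's: condition on $B^{-}(\lambda t)$ and exploit the conditional independence of the component processes. The paper computes $\mathbb{E}(G_i(t)G_j(t))$ by explicitly summing the negative binomial series and handles the diagonal case via the moment formulas for subordinated L\'evy processes, whereas you package the identical computation as the law of total covariance and read the case $i=j$ directly off the marginal law $G_i(t)\sim\Gamma(\lambda t,(1-\theta)a_i)$; both routes give the stated result.
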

\begin{proof}
For $i=j$, by using Eq. (3) of Leonenko et al. (2014), we have
\begin{align}\label{cov1}
\operatorname{Cov}(G_i(t),G_j(t))&=(\mathbb{E}(Z(1)))^2\operatorname{Var}(t+\lambda^{-1}B^-(\lambda t))+\operatorname{Var}(Z(1))\mathbb{E}(t+\lambda^{-1}B^-(\lambda t))\nonumber\\
&=\frac{\lambda t}{(a_i(1-\theta))^2},
\end{align}
where we have used \eqref{mean_gamma} and \eqref{meanvarnegbin}.

For $i\ne j$, by using the independence of $\{Z_i(t)\}_{t\ge0}$'s, we get
 \begin{align}\label{jomean}
\mathbb{E}(G_i(t)G_j(t))&=\sum_{n=0}^{\infty}\mathrm{Pr}\{B^-(\lambda t)=n\} \mathbb{E}(Z_i(t+\lambda^{-1}n))\mathbb{E}(Z_j(t+\lambda^{-1}n))\nonumber\\
&=\sum_{n=0}^{\infty}(1-\theta)^{\lambda t}{n+\lambda t-1 \choose n}\theta^n \frac{(n+\lambda t)^2}{a_ia_j}\nonumber\\
&=\frac{\lambda t (\lambda t+\theta)}{a_ia_j(1-\theta)^2}.
\end{align}
Also, by using Eq. (2) of Leonenko et al. (2014), we obtain
\begin{equation}\label{Gi_mean}
\mathbb{E}(G_i(t))= \mathbb{E}(Z(1))\mathbb{E}(t+\lambda^{-1}B^-(\lambda t))=\frac{\lambda t}{a_i(1-\theta)},
\end{equation}
which follows on using \eqref{mean_gamma} and \eqref{meanvarnegbin}.

Thus, in this case, by using \eqref{jomean} and \eqref{Gi_mean}, the covariance of $\{G_i(t)\}_{t\ge 0}$ and $\{G_j(t)\}_{t\ge 0}$ is given by
\begin{equation}\label{cov2}
\operatorname{Cov}(G_i(t),G_j(t))=\frac{\lambda t\theta}{a_ia_j(1-\theta)^2}.
\end{equation}
Finally, the result follows from \eqref{cov1} and \eqref{cov2}.
\end{proof}
\begin{proposition}
For $i=1$, $2$, $\dots$, $q$ and $j=1$, $2$, $\dots$, $q$, the codifference of $\{G_i(t)\}_{t\ge 0}$ and $\{G_j(t)\}_{t\ge 0}$ is 
{\footnotesize\begin{align*}
\tau(G_i(t),G_j(t))&=\lambda t \ln\Big(\frac{a_ia_j(1-\theta)}{a_ia_j(1-\theta)+\omega a_i-\omega a_j+1}\Big)\mathbb{I}_{\{i\ne j\}} -\lambda t\ln\Big(\frac{a_ia_j(1-\theta)^2}{(a_i(1-\theta)-\omega)(a_j(1-\theta)+\omega)}\Big).
\end{align*}}
\end{proposition}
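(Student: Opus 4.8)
The plan is to evaluate the codifference directly from its definition \eqref{codiff}, so that $\tau(G_i(t),G_j(t))=\ln\mathbb{E}(e^{\omega(G_i(t)-G_j(t))})-\ln\mathbb{E}(e^{\omega G_i(t)})-\ln\mathbb{E}(e^{-\omega G_j(t)})$, and to compute the three expectations separately. The two marginal terms are the same whether $i=j$ or $i\ne j$: since $\{G_i(t)\}_{t\ge0}$ is a gamma process with $G_i(t)\sim\Gamma(\lambda t,(1-\theta)a_i)$, replacing $u$ by $-\omega$ and by $\omega$ in the gamma Laplace transform \eqref{LT_gamma} gives $\mathbb{E}(e^{\omega G_i(t)})=(1-\omega/((1-\theta)a_i))^{-\lambda t}$ and $\mathbb{E}(e^{-\omega G_j(t)})=(1+\omega/((1-\theta)a_j))^{-\lambda t}$. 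Taking logarithms, $-\ln\mathbb{E}(e^{\omega G_i(t)})-\ln\mathbb{E}(e^{-\omega G_j(t)})=-\lambda t\ln\bigl(a_ia_j(1-\theta)^2/((a_i(1-\theta)-\omega)(a_j(1-\theta)+\omega))\bigr)$, which is precisely the second summand of the asserted formula.

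Next I would handle $\mathbb{E}(e^{\omega(G_i(t)-G_j(t))})$. If $i=j$ this equals $1$, its logarithm is $0$, and the indicator $\mathbb{I}_{\{i\ne j\}}$ in the statement correctly suppresses the first summand, so together with the previous paragraph the diagonal case is finished. If $i\ne j$, I condition on $\{B^-(\lambda t)=n\}$; on this event $G_i(t)=Z_i(t+\lambda^{-1}n)$ and $G_j(t)=Z_j(t+\lambda^{-1}n)$ with $\{Z_i(t)\}_{t\ge0}$ and $\{Z_j(t)\}_{t\ge0}$ independent (this is the conditional independence built into \eqref{gamma_sub}), so the conditional characteristic function factorizes and, since $Z_i(s)\sim\Gamma(\lambda s,a_i)$ with $\lambda s=\lambda t+n$, equals $\bigl((1-\omega/a_i)(1+\omega/a_j)\bigr)^{-(\lambda t+n)}$. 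Writing $c\coloneqq\bigl((1-\omega/a_i)(1+\omega/a_j)\bigr)^{-1}$, summing against the negative binomial pmf \eqref{negbin}, and applying the generalized binomial series $\sum_{n\ge0}\binom{n+\lambda t-1}{n}x^n=(1-x)^{-\lambda t}$ with $x=\theta c$ (which converges, since $|1-\omega/a_i|\ge1$ and $|1+\omega/a_j|\ge1$ force $|\theta c|<1$), I obtain $\mathbb{E}(e^{\omega(G_i(t)-G_j(t))})=\bigl((1-\theta)c/(1-\theta c)\bigr)^{\lambda t}$.

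Finally I would simplify: using $\omega^2=-1$ gives $(a_i-\omega)(a_j+\omega)=a_ia_j+\omega a_i-\omega a_j+1$, hence $c=a_ia_j/(a_ia_j+\omega a_i-\omega a_j+1)$ and $(1-\theta)c/(1-\theta c)=a_ia_j(1-\theta)/(a_ia_j(1-\theta)+\omega a_i-\omega a_j+1)$; taking the logarithm produces exactly the first summand of the statement, and adding the marginal contribution from the first paragraph completes the proof. The main work is the $i\ne j$ evaluation of $\mathbb{E}(e^{\omega(G_i(t)-G_j(t))})$---setting up the conditioning on $B^-(\lambda t)$ and carrying out the series summation---while the gamma characteristic function and the closing algebraic identity are routine; the only point requiring mild care is tracking the principal branch of the complex logarithm throughout.
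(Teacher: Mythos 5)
Your proposal is correct and follows essentially the same route as the paper: apply the definition of codifference, evaluate the marginal terms via the gamma Laplace transform of $G_i(t)\sim\Gamma(\lambda t,(1-\theta)a_i)$, and for $i\ne j$ condition on $B^-(\lambda t)$ and sum the resulting negative binomial series. Your organization is marginally cleaner (treating the marginal terms uniformly and letting the indicator kill the cross term when $i=j$), and your intermediate expression $\bigl((1-\omega/a_i)(1+\omega/a_j)\bigr)^{-(\lambda t+n)}$ is actually the correct form of a line that appears with a stray $(1-\theta)$ in the paper's displayed computation, though both arrive at the same final answer.
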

\begin{proof}
From \eqref{codiff}, we have
\begin{equation}\label{codiff1}
\tau(G_i(t),G_j(t))\coloneq\ln\mathbb{E}(e^{\omega(G_i(t)-G_j(t))})-\ln\mathbb{E}(e^{\omega G_i(t)})-\ln\mathbb{E}(e^{-\omega G_j(t)}).
\end{equation}
Also, from \eqref{LT_gamma}, we get
\begin{align}\label{wGi}
\mathbb{E}(e^{\omega G_i(t)})=\Big(\frac{a_i(1-\theta)}{a_i(1-\theta)-\omega}\Big)^{\lambda t}.
\end{align}
For $i=j$, on substituting \eqref{wGi} in \eqref{codiff1}, we obtain
\begin{equation}\label{tauij}
\tau(G_i(t),G_i(t))= -\lambda t\ln\Big(\frac{(a_i(1-\theta))^2}{(a_i(1-\theta))^2+1}\Big).
\end{equation}
For $i\ne j$, we have 
\begin{align*}
\mathbb{E}(e^{\omega(G_i(t)-G_j(t))})
&=\mathbb{E}\big(\mathbb{E}(e^{\omega G_i(t)}e^{-\omega G_j(t)}|B^-(\lambda t))\big)\nonumber\\
&=\sum_{n=0}^{\infty}\mathbb{E}\big(e^{\omega Z_i(t+\lambda^{-1} n)}e^{-\omega Z_j(t+\lambda^{-1} n)}\big)(1-\theta)^{\lambda t}{n+\lambda t-1\choose n}\theta^n\nonumber\\
&=\sum_{n=0}^{\infty}\Big(\frac{a_ia_j}{a_ia_j(1-\theta)+\omega a_i-\omega a_j+1}\Big)^{n+\lambda t}(1-\theta)^{\lambda t}{n+\lambda t-1\choose n}\theta^n\nonumber\\
&=\Big(\frac{a_ia_j(1-\theta)}{a_ia_j(1-\theta)+\omega a_i-\omega a_j+1}\Big)^{\lambda t}.
\end{align*}
Thus, for $i\ne j$, we obtain
{\footnotesize\begin{equation}\label{w2Gi}
\tau(G_i(t),G_j(t))=\lambda t \Big(\ln\Big(\frac{(a_i(1-\theta)-\omega)(a_j(1-\theta)+\omega)}{(1-\theta)(a_ia_j(1-\theta)+\omega a_i-\omega a_j+1)}\Big) \Big).
\end{equation}}
Finally, the required result follows from \eqref{tauij} and \eqref{w2Gi}.
\end{proof}

\begin{proposition}
The joint Laplace-Stieltjes transform of multivariate gamma subordinator $\bar{G}(t)$ is given by
\begin{equation}\label{Ltmg}
\mathbb{E}\big(e^{-\bar{s}\cdot\bar{G}(t)}\big)=\Big(\frac{1-\theta}{\prod_{i=1}^{q}(1+s_ia_i^{-1})-\theta}\Big)^{\lambda t},\ \bar{s}>\bar{0},\ t\ge0,
\end{equation}
where $a_i>0$ and $0<\theta<1$.
\end{proposition}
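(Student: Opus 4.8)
The plan is to exploit the conditional independence of the component processes $\{G_i(t)\}_{t\ge0}$ given $\{B^-(\lambda t)\}_{t\ge0}$, exactly as in the proof of the pdf formula \eqref{gamma_density}. First I would condition on the event $\{B^-(\lambda t)=n\}$. On this event $G_i(t)=Z_i(t+\lambda^{-1}n)$ for each $i$, and these are independent by the conditional independence assumption, so the conditional joint Laplace--Stieltjes transform factorises as
\begin{equation*}
\mathbb{E}\big(e^{-\bar{s}\cdot\bar{G}(t)}\mid B^-(\lambda t)=n\big)=\prod_{i=1}^{q}\mathbb{E}\big(e^{-s_iZ_i(t+\lambda^{-1}n)}\big).
\end{equation*}

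Next I would evaluate each factor using the Laplace transform of the gamma process \eqref{LT_gamma}, which gives $\mathbb{E}(e^{-s_iZ_i(t+\lambda^{-1}n)})=(1+s_i/a_i)^{-(\lambda t+n)}$. Writing $P\coloneqq\prod_{i=1}^{q}(1+s_i/a_i)$, the conditional transform is $P^{-(\lambda t+n)}$. Taking expectation over $B^-(\lambda t)$ and interchanging the sum and expectation (legitimate since all terms are nonnegative, by Tonelli), I obtain
\begin{equation*}
\mathbb{E}\big(e^{-\bar{s}\cdot\bar{G}(t)}\big)=P^{-\lambda t}\sum_{n=0}^{\infty}\mathrm{Pr}\{B^-(\lambda t)=n\}\,P^{-n}=P^{-\lambda t}\,G_{B^-(\lambda t)}\big(P^{-1}\big),
\end{equation*}
where $G_{B^-(\lambda t)}$ denotes the pgf of $B^-(\lambda t)$.

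Finally I would substitute the negative binomial pgf \eqref{pgf_negbin} with time argument $\lambda t$ and $u=P^{-1}$, namely $G_{B^-(\lambda t)}(P^{-1})=\big((1-\theta)/(1-\theta P^{-1})\big)^{\lambda t}$, and simplify: $P^{-\lambda t}\big((1-\theta)/(1-\theta P^{-1})\big)^{\lambda t}=\big((1-\theta)/(P-\theta)\big)^{\lambda t}$, which is precisely \eqref{Ltmg}. The only point that deserves a word of care is the validity of the pgf substitution: \eqref{pgf_negbin} holds for $|u|<1/(1-\theta)$, and here $u=P^{-1}\in(0,1]$ because $\bar{s}>\bar{0}$ forces $P\ge1$; equivalently $P-\theta\ge1-\theta>0$, so the final expression is well defined. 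I do not expect a genuine obstacle here — the argument is a routine conditioning computation — but if a self-contained derivation were preferred, one could instead integrate the pdf \eqref{gamma_density} against $e^{-\bar{s}\cdot\bar{x}}$ term by term, recognising each integral as a gamma integral, and recover the same series in $P^{-n}$.
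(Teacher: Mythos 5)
Your proof is correct and follows essentially the same route as the paper: the paper integrates $e^{-\bar{s}\cdot\bar{x}}$ against the pdf \eqref{gamma_density} and arrives at exactly the series $(1-\theta)^{\lambda t}\sum_{n\ge0}\binom{n+\lambda t-1}{n}\theta^n P^{-(n+\lambda t)}$ that you obtain by conditioning directly on $B^-(\lambda t)$, and both then sum the negative binomial series. Your remark on the domain of validity of the pgf substitution ($P\ge 1$ so $\theta P^{-1}<1$) is a detail the paper leaves implicit.
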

\begin{proof}
	By using \eqref{gamma_density}, we have
	\begin{align*}
		\mathbb{E}\big(e^{-\bar{s}\cdot\bar{G}(t)}\big)&=\mathbb{E}\big(e^{-\sum_{i=1}^{q}s_i G_i(t)}\big)\\
		&=\int_{0}^{\infty}\int_{0}^{\infty}\dots\int_{0}^{\infty}e^{-\sum_{i=1}^{q}s_i x_i}g(\bar{x},t)\,\mathrm{d}x_1\,\mathrm{d}x_2\dots\mathrm{d}x_q\\
		&=\frac{(1-\theta)^{\lambda t}}{\Gamma(\lambda t)}\sum_{n=0}^{\infty}\frac{\theta ^n}{n!(\Gamma(n+\lambda t))^{q-1}}\prod_{i=1}^{q}\int_{0}^{\infty}\frac{(a_ix_i)^{n+\lambda t}}{x_i}e^{-(a_i+s_i)x_i}\,\mathrm{d}x_i\\
		&=\frac{(1-\theta)^{\lambda t}}{\Gamma(\lambda t)}\sum_{n=0}^{\infty}\frac{\theta ^n}{n!(\Gamma(n+\lambda t))^{q-1}}\prod_{i=1}^{q}\frac{a_i^{n+\lambda t}\Gamma(n+\lambda t)}{(a_i+s_i)^{n+\lambda t}}\\
		&=(1-\theta)^{\lambda t}\sum_{n=0}^{\infty}{n+\lambda t-1 \choose n}\theta ^n\prod_{i=1}^{q}\Big(\frac{a_i+s_i}{a_i}\Big)^{-n-\lambda t}.
	\end{align*}
	This completes the proof.
\end{proof}
\begin{remark}
For $q=2$ and $a_1=a_2=a$, the Laplace-Stieltjes transform in \eqref{Ltmg} reduces to that of bivariate gamma subordinator (see Meoli (2025), Eq. (3.3)). Also, for $q=1$, it reduces to that of gamma subordinator (see Applebaum (2009), p. 55).
\end{remark}
\subsection{Governing differential equation of the pdf} Here, we obtain the system of differential equations that governs the pdf of multivariate gamma subordinator for a particular case in which $a_1=a_2=\dots=a_q=a$. 
	
Let $\mathcal{F}\{g(\bar{x},t);\alpha_1,\alpha_2,\dots,\alpha_q\}$ be the Fourier transform of $g(\bar{x},t)$. Then, for $a_1=a_2=\dots=a_q=a$, we have
\begin{equation}\label{fourier}
\mathcal{F}\{g(\bar{x},t);\alpha_1,\alpha_2,\dots,\alpha_q\}=\mathbb{E}\big[e^{\omega\sum_{j=1}^{q}\alpha_j G_j(t)}\big]=\bigg(\frac{(1-\theta)a^q}{\big(\prod_{j=1}^{q}(a-\omega\alpha_j)\big)-\theta a^q}\bigg)^{\lambda t}.
\end{equation}
In the next result, we will use the following shift operator (see Beghin (2014), Eq. (1.13)):
\begin{equation}\label{shiftope}
	e^{-k\partial_t}f(t)\coloneq \sum_{n=0}^{\infty}\frac{(-k\partial_t)^n}{n!}f(t)=f(t-k), \, k\in\mathbb{R},
\end{equation}
where $f\colon\mathbb{R}\to \mathbb{R}$ is an analytic function. 

\begin{theorem}
Let $\|\cdot\|$ denote the Euclidean norm and $e^{-\frac{1}{\lambda}\partial_t}$ be a shift operator as defined in \eqref{shiftope}. Then, for $a_1=a_2=\dots=a_q=a$, the pdf of the multivariate gamma subordinator satisfies the following differential equation:
{\small	\begin{equation}\label{de1}
		\Big(\sum_{m=1}^{q}a^{q-m}\sum_{1\le j_1<j_2<\dots<j_m\le q}\frac{\partial^m}{\partial x_{j_1}\partial x_{j_2}\dots\partial x_{j_m}}\Big)g(\bar{x},t)=-(1-\theta)a^q\Big(1-e^{-\frac{1}{\lambda}\partial_t}\Big)g(\bar{x},t),\ \bar{x}>\bar{0}, \  t\ge 0,
	\end{equation}}
with initial and boundary conditions $g(\bar{x},0)=\delta (\bar{x})$ and $\lim_{\|\bar{x}\|\to \infty}g(\bar{x},t)=0$, respectively.  
\end{theorem}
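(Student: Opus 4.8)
The plan is to pass to the Fourier transform in the spatial variables $\bar{x}=(x_1,\dots,x_q)$ and thereby reduce \eqref{de1} to an elementary algebraic identity for the explicit transform in \eqref{fourier}.

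First I would record how the left-hand operator of \eqref{de1} acts on the transform side. With the convention underlying \eqref{fourier}, namely $\mathcal{F}\{h;\bar\alpha\}=\int e^{\omega\bar\alpha\cdot\bar x}h(\bar x)\,\mathrm{d}\bar x$, integration by parts gives $\mathcal{F}\{\partial_{x_j}h;\bar\alpha\}=-\omega\alpha_j\,\mathcal{F}\{h;\bar\alpha\}$, so that each mixed derivative $\partial^m/(\partial x_{j_1}\cdots\partial x_{j_m})$ turns into multiplication by $(-\omega\alpha_{j_1})\cdots(-\omega\alpha_{j_m})$. Thus the Fourier transform of the left side of \eqref{de1} equals $\big(\sum_{m=1}^{q}a^{q-m}e_m(-\omega\alpha_1,\dots,-\omega\alpha_q)\big)\mathcal{F}\{g;\bar\alpha\}$, where $e_m(y_1,\dots,y_q)=\sum_{1\le j_1<\dots<j_m\le q}y_{j_1}\cdots y_{j_m}$ denotes the $m$-th elementary symmetric polynomial; the generating identity $\prod_{j=1}^{q}(a+y_j)=\sum_{m=0}^{q}a^{q-m}e_m(y_1,\dots,y_q)$ with $y_j=-\omega\alpha_j$, after moving the $m=0$ term $a^q$ aside, collapses the bracket to $\prod_{j=1}^{q}(a-\omega\alpha_j)-a^q$. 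On the right side the shift operator acts in $t$ only, so by \eqref{shiftope} it amounts to $\mathcal{F}\{g(\bar x,t);\bar\alpha\}\mapsto\mathcal{F}\{g(\bar x,t-\lambda^{-1});\bar\alpha\}$.

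Writing $C=C(\bar\alpha)=\dfrac{(1-\theta)a^q}{\prod_{j=1}^{q}(a-\omega\alpha_j)-\theta a^q}$, so that $\mathcal{F}\{g(\bar x,t);\bar\alpha\}=C^{\lambda t}$ by \eqref{fourier}, the Fourier transform of \eqref{de1} becomes
\[
\Big(\prod_{j=1}^{q}(a-\omega\alpha_j)-a^q\Big)C^{\lambda t}=-(1-\theta)a^q\big(C^{\lambda t}-C^{\lambda t-1}\big),
\]
equivalently $\prod_{j=1}^{q}(a-\omega\alpha_j)-a^q=-(1-\theta)a^q(1-C^{-1})$; substituting $C^{-1}=\big(\prod_{j=1}^{q}(a-\omega\alpha_j)-\theta a^q\big)/((1-\theta)a^q)$, the $\theta a^q$ terms cancel and the identity follows in one line. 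Inverting the Fourier transform returns \eqref{de1}. For the side conditions, $t=0$ gives $\mathcal{F}\{g(\bar x,0);\bar\alpha\}\equiv 1$, whose inverse transform is $\delta(\bar x)$, hence $g(\bar x,0)=\delta(\bar x)$; and since by \eqref{gamma_density} the density $g(\cdot,t)$ is nonnegative, integrable over $(0,\infty)^q$ and decays like $e^{-a\sum_i x_i}$, we have $\lim_{\|\bar x\|\to\infty}g(\bar x,t)=0$.

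I expect the only genuine bookkeeping to be the elementary-symmetric-polynomial expansion and the correct isolation of the $m=0$ term on each side; once the Fourier multiplier of the spatial operator is identified as $\prod_{j=1}^{q}(a-\omega\alpha_j)-a^q$, the remainder is a routine verification. For completeness one should note that the manipulations with $e^{-\frac1\lambda\partial_t}$ are legitimate in the sense of \eqref{shiftope} because $t\mapsto\mathcal{F}\{g(\bar x,t);\bar\alpha\}=e^{\lambda t\ln C(\bar\alpha)}$ is analytic in $t$, and that the Fourier inversion step is justified since the functions involved and their spatial derivatives are smooth and integrable on $(0,\infty)^q$.
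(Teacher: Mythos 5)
Your proof is correct and follows essentially the same route as the paper: take the spatial Fourier transform, use \eqref{fourier} to write $\mathcal{F}\{g\}=C^{\lambda t}$, and reduce \eqref{de1} to a one-line algebraic identity, with the shift operator \eqref{shiftope} producing the factor $C^{\lambda t}-C^{\lambda t-1}$. The only difference is that where the paper verifies the cases $q=1,2,3$ explicitly and then asserts that the general-$q$ multiplier ``on simplification'' reduces to the right-hand side, you close that step cleanly via the elementary symmetric polynomial identity $\prod_{j=1}^{q}(a-\omega\alpha_j)=\sum_{m=0}^{q}a^{q-m}e_m(-\omega\alpha_1,\dots,-\omega\alpha_q)$, which makes explicit exactly the cancellation the paper leaves implicit.
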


\begin{proof}
By the definition of Dirac delta function, we have
{\footnotesize\begin{equation*}
\delta(\bar{x})=\delta (x_1,x_2,\dots,x_q)=\prod_{j=1}^{q}\delta (x_j)=\prod_{j=1}^{q}\frac{1}{2\pi}\int_{\mathbb{R}}e^{-\omega\theta_jx_j}\,\mathrm{d}\theta_j
=\frac{1}{(2\pi)^q}\int_{\mathbb{R}^q}e^{-\omega\sum_{j=1}^{q}\theta_j x_j}\,\mathrm{d}x_1\,\mathrm{d}x_2\dots\mathrm{d}x_q.
\end{equation*}}
The initial condition follows from \eqref{fourier} and the definition of the Dirac delta function.
Also, it can be shown that the pdf  \eqref{gamma_density} of multivariate gamma subordinator satisfies the boundary condition.
	
For $q=1$, we have
{\footnotesize	\begin{equation}\label{q1_lhs}
		\mathcal{F}\Big\{\frac{\partial}{\partial x_1}g(x_1,t);\alpha_1\Big\}=(-\omega\alpha_1)\Big(\frac{(1-\theta)a}{a-\omega\alpha_1-\theta a}\Big)^{\lambda t}
	\end{equation}}
and
{\footnotesize \begin{equation*}
		\mathcal{F}\Big\{-(1-\theta)a\Big(1-e^{-\frac{1}{\lambda}\partial_t}\Big)g(x_1,t);\alpha_1\Big\}=-(1-\theta)a\Big(\frac{(1-\theta)a}{a-\omega\alpha_1-\theta a}\Big)^{\lambda t}\Big(1-\Big(\frac{(1-\theta)a}{a-\omega\alpha_1-\theta a}\Big)^{-1}\Big)
	\end{equation*}}
	which coincides with \eqref{q1_lhs}.

For $q=2$, we have
{	\begin{align}\label{q2_lhs}
		\mathcal{F}\Bigl\{\Big(&\frac{\partial^2}{\partial x_1\partial x_2}+a\Big(\frac{\partial}{\partial x_1}+\frac{\partial}{\partial x_2}\Big)\Big)g(\bar{x},t);\alpha_1,\alpha_2\Bigr\}\nonumber\\
		&=\Big((-\omega\alpha_1)(-\omega\alpha_2)+a(-\omega\alpha_1-\omega\alpha_2)\Big)\Big(\frac{(1-\theta)a^2}{(a-\omega\alpha_1)(a-\omega\alpha_2)-\theta a^2}\Big)^{\lambda t},
	\end{align}}
	and
{\footnotesize	\begin{align*}
		\mathcal{F}\Big\{-(1-\theta)&a^2\Big(1-e^{-\frac{1}{\lambda}\partial_t}\Big)g(\bar{x},t);\alpha_1,\alpha_2\Big\}\\
		&=-(1-\theta)a^2 \Big(\frac{(1-\theta)a^2}{(a-\omega\alpha_1)(a-\omega\alpha_2)-\theta a^2}\Big)^{\lambda t}\Big(1-\Big(\frac{(1-\theta)a^2}{(a-\omega\alpha_1)(a-\omega\alpha_2)-\theta a^2}\Big)^{-1}\Big),
	\end{align*}}
	which coincides with \eqref{q2_lhs}.
	
For $q=3$, we have
{\scriptsize	\begin{align}\label{q3_lhs}
		&\mathcal{F}\Bigl\{\Big(\frac{\partial^3}{\partial x_1\partial x_2\partial x_3}+a\Big(\frac{\partial^2}{\partial x_1\partial x_2}+\frac{\partial^2}{\partial x_1\partial x_3}+\frac{\partial^2}{\partial x_2\partial x_3}\Big)+a^2\Big(\frac{\partial}{\partial x_1}+\frac{\partial}{\partial x_2}+\frac{\partial}{\partial x_3}\Big)\Big)g(\bar{x},t);\alpha_1,\alpha_2,\alpha_3\Bigr\}\nonumber\\
		&=\Big((-\omega\alpha_1)(-\omega\alpha_2)(-\omega\alpha_3)+a((-\omega\alpha_1)(-\omega\alpha_2)+(-\omega\alpha_1)(-\omega\alpha_3)+(-\omega\alpha_2)(-\omega\alpha_3))\nonumber\\
		&\hspace{5cm}+a^2(-\omega\alpha_1-\omega\alpha_2-\omega\alpha_3)\Big)
		\Big(\frac{(1-\theta)a^3}{(a-\omega\alpha_1)(a-\omega\alpha_2)(a-\omega\alpha_3)-\theta a^3}\Big)^{\lambda t},
	\end{align}}
	and
{\scriptsize \begin{align*}
		\mathcal{F}\Big\{-&(1-\theta)a^3\Big(1-e^{-\frac{1}{\lambda}\partial_t}\Big)g(\bar{x},t);\alpha_1,\alpha_2,\alpha_3\Big\}\\
		&=-(1-\theta)a^3 \Big(\frac{(1-\theta)a^3}{(a-\omega\alpha_1)(a-\omega\alpha_2)(a-\omega\alpha_3)-\theta a^3}\Big)^{\lambda t}\Big(1-\Big(\frac{(1-\theta)a^3}{(a-\omega\alpha_1)(a-\omega\alpha_2)(a-\omega\alpha_3)-\theta a^3}\Big)^{-1}\Big),
	\end{align*}}
	which coincides with \eqref{q3_lhs}.

Proceeding inductively, we get the required differential equation in the form of \eqref{de1}.
		
By using Eq. (1.3.29) of Kilbas {\it et al.} (2006) and \eqref{fourier}, we have
	{\small\begin{align*}
		\mathcal{F}\Big\{\frac{\partial^q}{\partial x_1\dots\partial x_q}g(\bar{x},t); \alpha_1,\dots,\alpha_q\Big\}&=\Big(\prod_{j=1}^{q}(-i\alpha_j)\Big)\mathcal{F}\{g(\bar{x},t); \alpha_1,\dots,\alpha_q\}\\
		&=\Big(\prod_{j=1}^{q}(-i\alpha_j)\Big)\Big(\frac{(1-\theta)a^q}{\big(\prod_{j=1}^{q}(a-i\alpha_j)\big)-\theta a^q}\Big)^{\lambda t},\\
	\mathcal{F}\Big\{\frac{\partial^{q-1}}{\partial x_1\dots\partial x_{k-1}\partial x_{k+1}\dots\partial x_q}g(\bar{x},t); \alpha_1,\dots,\alpha_q\Big\}
	&=\Big(\prod_{\substack{j=1\\j\neq k}}^{q}(-i\alpha_j)\Big)\Big(\frac{(1-\theta)a^q}{\big(\prod_{j=1}^{q}(a-i\alpha_j)\big)-\theta a^q}\Big)^{\lambda t},\\
	&\ \ \vdots\\
	\mathcal{F}\Big\{\frac{\partial^2}{\partial x_k\partial x_l}g(\bar{x},t); \alpha_1,\dots,\alpha_q\Big\}&=(-i\alpha_k)(-i\alpha_l)\Big(\frac{(1-\theta)a^q}{\big(\prod_{j=1}^{q}(a-i\alpha_j)\big)-\theta a^q}\Big)^{\lambda t},\\
	\mathcal{F}\Big\{\frac{\partial}{\partial x_k}g(\bar{x},t); \alpha_1,\dots,\alpha_q\Big\}&=(-i\alpha_k)\Big(\frac{(1-\theta)a^q}{\big(\prod_{j=1}^{q}(a-i\alpha_j)\big)-\theta a^q}\Big)^{\lambda t}.
\end{align*}}
Therefore, the Fourier transform of the left hand side of \eqref{de1} is 
{\small\begin{align}\label{LHS}
\mathcal{F}&\Big\{\Big(\sum_{m=1}^{q}a^{q-m}\sum_{1\le j_1<j_2<\dots<j_m\le q}\frac{\partial^m}{\partial x_{j_1}\partial x_{j_2}\dots\partial x_{j_m}}\Big)g(\bar{x},t);\alpha_1,\alpha_2,\dots,\alpha_q\Big\}\nonumber\\	&=\Big(\frac{(1-\theta)a^q}{\big(\prod_{j=1}^{q}(a-i\alpha_j)\big)-\theta a^q}\Big)^{\lambda t}\nonumber\\
&\ \ \cdot
\Big(\prod_{j=1}^{q}(-i\alpha_j)+ a\sum_{k=1}^{q}\prod_{\substack{j=1\\j\neq k}}^{q}(-i\alpha_j)+\dots+a^{q-2}\sum_{\substack{k<l\\1\le k\le q-1}}(-i\alpha_k)(-i\alpha_l)+a^{q-1}\sum_{k=1}^{q}(-i\alpha_k)\Big).
\end{align}}
	Now, the Fourier transform of the right hand side of \eqref{de1} is given by
	\begin{align}\label{RHS}
		&\mathcal{F}\Bigl\{-(1-\theta)a^q\Big(1-e^{-\frac{1}{\lambda}\partial_t}\Big)g(\bar{x},t);\alpha_1,\dots,\alpha_q\Bigr\}\nonumber\\
		&=-(1-\theta)a^q \Big(\Big(\frac{(1-\theta)a^q}{\big(\prod_{j=1}^{q}(a-i\alpha_j)\big)-\theta a^q}\Big)^{\lambda t}-\Big(\frac{(1-\theta)a^q}{\big(\prod_{j=1}^{q}(a-i\alpha_j)\big)-\theta a^q}\Big)^{\lambda t-1}\Big)\nonumber\\
		&=-(1-\theta)a^q \Big(\frac{(1-\theta)a^q}{\big(\prod_{j=1}^{q}(a-i\alpha_j)\big)-\theta a^q}\Big)^{\lambda t}\Big(1-\Big(\frac{\big(\prod_{j=1}^{q}(a-i\alpha_j)\big)-\theta a^q}{(1-\theta)a^q}\Big)\Big).
	\end{align}
Note that on simplification \eqref{LHS} reduces to \eqref{RHS} which establishes the stated result.
\end{proof}
In the next result, we obtain an alternate governing differential equation of the pdf of multivariate gamma subordinator.
The following differential operator will be used:

For parameters $b_1>0,b_2>0$ and $b_3>0$, we consider the following differential operator:
{\footnotesize\begin{equation*}
	\mathcal{D}_{b_1,b_2}^{b_3}\coloneq\sum_{n=1}^{\infty}\frac{(-1)^{n+1}}{n} \sum_{k=0}^{n}{n\choose k}\frac{(-1)^{n-k}}{b_1^k}\sum_{l=0}^{k}{k\choose l}(-b_2)^{k-l}\Big(\prod_{j=1}^{q}\sum_{l_j=0}^{l}{l \choose l_j}b_3^{l_j}\Big)\frac{\partial^{\sum_{j=1}^{q}(l-l_j)}}{\partial x_1^{l-l_1}\partial x_2^{l-l_2}\dots\partial x_q^{l-l_q}}
\end{equation*}}

which acts on any function that is infinitely differentiable.

\begin{theorem}
For $a_1=a_2=\dots=a_q$, the pdf of multivariate gamma subordinator solves the following differential equation:
	\begin{equation}\label{de2}
		\frac{\partial}{\partial t}g(\bar{x}, t)=-\lambda \mathcal{D}_{(1-\theta)a^q,\theta a^q}^a\,g(\bar{x},t),
	\end{equation}
	with initial and boundary conditions
	\begin{equation}\label{ic_bc2}
		\begin{cases}
			g(\bar{x},0)=\delta (\bar{x}),\\
			\lim_{\|\bar{x}\|\to \infty}\,\frac{\partial^{\sum_{j=1}^{q}(l-l_j)}}{\partial x_1^{l-l_1}\partial x_2^{l-l_2}\dots\partial x_q^{l-l_q}}g(\bar{x},t)=0,
		\end{cases}
	\end{equation}
	respectively.
\end{theorem}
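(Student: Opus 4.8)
The plan is to verify \eqref{de2} by passing to the Fourier transform in the spatial variables $\bar{x}$, exactly as in the proof of the preceding theorem. Set $P(\bar{\alpha})\coloneqq\prod_{j=1}^{q}(a-\omega\alpha_j)$. By \eqref{fourier}, $\mathcal{F}\{g(\bar{x},t);\bar{\alpha}\}=\big((1-\theta)a^{q}/(P(\bar{\alpha})-\theta a^{q})\big)^{\lambda t}$, so differentiating in $t$ gives
\[
\mathcal{F}\Big\{\frac{\partial}{\partial t}g(\bar{x},t);\bar{\alpha}\Big\}=\lambda\ln\Big(\frac{(1-\theta)a^{q}}{P(\bar{\alpha})-\theta a^{q}}\Big)\,\mathcal{F}\{g(\bar{x},t);\bar{\alpha}\}.
\]
Hence it suffices to show that $\mathcal{D}^{a}_{(1-\theta)a^{q},\theta a^{q}}$ has Fourier symbol $\ln\big((P(\bar{\alpha})-\theta a^{q})/((1-\theta)a^{q})\big)$; then the symbol of $-\lambda\,\mathcal{D}^{a}_{(1-\theta)a^{q},\theta a^{q}}$ coincides with the scalar factor on the right-hand side above, and inverting the Fourier transform gives \eqref{de2}.

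To obtain that symbol, I would collapse the four nested sums in the definition of $\mathcal{D}^{b_3}_{b_1,b_2}$ one after another by the binomial theorem, using that $\partial/\partial x_j$ has Fourier symbol $-\omega\alpha_j$. The innermost product yields $\prod_{j=1}^{q}\sum_{l_j=0}^{l}\binom{l}{l_j}b_3^{l_j}(-\omega\alpha_j)^{l-l_j}=\prod_{j=1}^{q}(b_3-\omega\alpha_j)^{l}=P(\bar{\alpha})^{l}$ with $b_3=a$; the sum over $l$ then gives $\sum_{l=0}^{k}\binom{k}{l}(-b_2)^{k-l}P(\bar{\alpha})^{l}=(P(\bar{\alpha})-b_2)^{k}$; the sum over $k$ gives $\sum_{k=0}^{n}\binom{n}{k}(-1)^{n-k}b_1^{-k}(P(\bar{\alpha})-b_2)^{k}=\big((P(\bar{\alpha})-b_1-b_2)/b_1\big)^{n}$, which equals $\big((P(\bar{\alpha})-a^{q})/b_1\big)^{n}$ because $b_1+b_2=(1-\theta)a^{q}+\theta a^{q}=a^{q}$; and finally the sum over $n$ is the Mercator series $\sum_{n\ge1}\frac{(-1)^{n+1}}{n}x^{n}=\ln(1+x)$ with $x=(P(\bar{\alpha})-a^{q})/b_1$, so the symbol is $\ln\big(1+(P(\bar{\alpha})-a^{q})/b_1\big)=\ln\big((P(\bar{\alpha})-b_2)/b_1\big)$, which is exactly the required expression when $b_1=(1-\theta)a^{q}$ and $b_2=\theta a^{q}$.

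For the side conditions I would argue as in the previous theorem: $g(\bar{x},0)=\delta(\bar{x})$ follows from \eqref{fourier} since the symbol equals $1$ at $t=0$, and the decay condition in \eqref{ic_bc2} can be read off from the explicit series \eqref{gamma_density}, each term of which is (a polynomial times) a product of one-dimensional gamma densities in the $x_i$, so that such a term together with all its mixed partial derivatives tends to $0$ as $\|\bar{x}\|\to\infty$.

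The step I expect to be the genuine obstacle is analytic rather than algebraic: the Mercator expansion above converges only where $|P(\bar{\alpha})-a^{q}|<(1-\theta)a^{q}$, whereas $|P(\bar{\alpha})|=\prod_{j=1}^{q}\sqrt{a^{2}+\alpha_j^{2}}$ is unbounded in $\bar{\alpha}$, so the symbol identity --- and hence \eqref{de2} itself --- is to be understood in the sense of the formal infinite-order operator $\mathcal{D}^{b_3}_{b_1,b_2}$ acting term by term (equivalently, as an identity of analytic functions of $\bar{\alpha}$ on the frequency region where the series converges), and one must also justify interchanging the summation over $n$ with the Fourier inversion. I would handle this at the same level of formal manipulation already adopted for the shift-operator identity in the preceding theorem.
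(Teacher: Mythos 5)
Your proposal is correct and takes essentially the same approach as the paper's proof: both work in Fourier space and identify the symbol of $\mathcal{D}^{a}_{(1-\theta)a^{q},\theta a^{q}}$ with $\ln\bigl(\bigl(\prod_{j=1}^{q}(a-\omega\alpha_j)-\theta a^{q}\bigr)/((1-\theta)a^{q})\bigr)$ via repeated binomial expansions and the Mercator series, you simply collapsing the nested sums where the paper expands the logarithm. Your closing caveat about the divergence of the Mercator series for large frequencies, and hence the formal nature of the infinite-order operator identity, is a fair point that the paper leaves implicit.
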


\begin{proof}
On taking the Fourier transform of left hand side of \eqref{de2} and by using \eqref{fourier}, we have
{\small	\begin{align*}
		&\mathcal{F}\Big\{\frac{\partial}{\partial t}g(\bar{x},t);\alpha_1,\alpha_2,\dots,\alpha_q\Big\}\\
		&=-\lambda \Big(\frac{\big(\prod_{j=1}^{q}(a-\omega\alpha_j)\big)-\theta a^q}{(1-\theta)a^q}\Big)^{-\lambda t} \ln \Big(\frac{\big(\prod_{j=1}^{q}(a-\omega\alpha_j)\big)-\theta a^q}{(1-\theta)a^q}\Big)\\
		&=-\lambda \Big(\frac{\big(\prod_{j=1}^{q}(a-\omega\alpha_j)\big)-\theta a^q}{(1-\theta)a^q}\Big)^{-\lambda t}\sum_{n=1}^{\infty}\frac{(-1)^{n+1}}{n}\Big(\frac{\big(\prod_{j=1}^{q}(a-\omega\alpha_j)\big)-\theta a^q}{(1-\theta)a^q}-1\Big)^n\\
		&=-\lambda\Big(\frac{\big(\prod_{j=1}^{q}(a-\omega\alpha_j)\big)-\theta a^q}{(1-\theta)a^q}\Big)^{-\lambda t}\sum_{n=1}^{\infty}\frac{(-1)^{n+1}}{n} \sum_{k=0}^{n}{n\choose k}(-1)^{n-k}\Big(\frac{\big(\prod_{j=1}^{q}(a-\omega\alpha_j)\big)-\theta a^q}{(1-\theta)a^q}\Big)^k \\
		&=-\lambda\Big(\frac{\big(\prod_{j=1}^{q}(a-\omega\alpha_j)\big)-\theta a^q}{(1-\theta)a^q}\Big)^{-\lambda t}\sum_{n=1}^{\infty}\frac{(-1)^{n+1}}{n}\\ 
		&\hspace{4.5cm}\cdot\sum_{k=0}^{n}{n\choose k}\frac{(-1)^{n-k}}{((1-\theta)a^q)^k}\sum_{l=0}^{k}{k\choose l}(-\theta a^q)^{k-l} \prod_{j=1}^{q}\sum_{l_j=0}^{l}{l \choose l_j}a^{l_j}(-\omega\alpha_j)^{l-l_j}\\
		&=-\lambda\sum_{n=1}^{\infty}\frac{(-1)^{n+1}}{n} \sum_{k=0}^{n}{n\choose k}\frac{(-1)^{n-k}}{((1-\theta)a^q)^k}\sum_{l=0}^{k}{k\choose l}(-\theta a^q)^{k-l}\Big(\prod_{j=1}^{q}\sum_{l_j=0}^{l}{l \choose l_j}a^{l_j}\Big)\\
		&\hspace{8cm}\cdot \mathcal{F}\Bigl\{\frac{\partial^{\sum_{j=1}^{q}(l-l_j)}}{\partial x_1^{l-l_1}\partial x_2^{l-l_2}\dots\partial x_q^{l-l_q}}g(\bar{x},t);\alpha_1,\alpha_2,\dots,\alpha_q\Bigr\}\\
		&=-\lambda \mathcal{D}_{(1-\theta)a^q,\theta a^q}^a\,\mathcal{F}\{g(\bar{x},t);\alpha_1,\alpha_2,\dots,\alpha_q\}.
	\end{align*}}
It can be established that the pdf $g(\bar{x},t)$ satisfies \eqref{ic_bc2}. This completes the proof.
\end{proof}

\begin{remark}
On substituting $q=2$ in \eqref{de2}, it reduces to the governing differential equation of a bivariate gamma subordinator (see Meoli (2025), Eq. (3.9)).
\end{remark}

\section{Multivariate GCP time-changed by multivariate gamma subordinator}
Kataria and Dhillon (2025) introduced a multivariate version of GCP and studied its various time-changed variants. Here, we study a multivariate GCP with independent components subordinated with  an independent multivariate gamma subordinator. 

Let $\{\bar{M}(t)\}_{t\ge0}$ be a multivariate GCP whose component processes $\{M_i(t)\}_{t\geq0}$'s are independent GCPs which perform independently $k_i$ kinds of jumps with positive rates $\lambda_{ij_i}$, $j_i=1$, $2$, $\dots$, $k_i$. Also, let $\{\bar{G}(t)\}_{t\geq0}$ be a multivariate gamma subordinator whose component processes $\{G_i(t)\}_{t \geq 0}$'s are conditionally independent as discussed in Section 3, and it is independent of  $\{\bar{M}(t)\}_{t\ge0}$. For $i=1,2,\dots,q$, let $\mathscr{M}_i(t)=M_i(G_i(t))$. We define a time-changed process $\{\bar{\mathscr{M}}(t)\}_{t\geq0}$ as follows:
\begin{equation}\label{timechn}
\bar{\mathscr{M}}(t)\coloneqq(\mathscr{M}_1(t), \mathscr{M}_2(t), \dots,\mathscr{M}_q(t)).
\end{equation}
	
From \eqref{multi_gamma}, recall that $G_i(t)=Z_i(t+\lambda^{-1}B^{-}(\lambda t))$, where $\{Z_i(t)\}_{t\ge0}$ is a gamma process with distribution $\Gamma(\lambda t, a_i)$ for all $i=1,2,\dots,q$. To obtain the distributional properties of $\{\bar{\mathscr{M}}(t)\}_{t\geq0}$, we consider the following subordinated L\'evy process: 
\begin{align*}
\bar{M}(\bar{Z}(t))\coloneq(M_1(Z_1(t)),M_2(Z_2(t)),\dots,M_q(Z_q(t))),\ t\ge0,
\end{align*} 
where $\{M_i(t)\}_{t\ge0}$'s are independent of $\{Z_i(t)\}_{t\ge0}$'s.

By using Theorem 6 of Kataria and Dhillon (2025), the state probabilities of $\{\bar{M}(\bar{Z}(t))\}_{t\ge 0}$ can be obtained in the following form:
{\footnotesize	\begin{equation}\label{stateMZ}
h(\bar{n},t)=\mathrm{Pr}\{\bar{M}(\bar{Z}(t))=\bar{n}\}=\prod_{i=1}^{q}\Big(\frac{a_i}{a_i+\lambda_i}\Big)^{\lambda t}\sum_{\Omega(k_i,n_i)}\frac{\Gamma(\eta_i+\lambda t)}{\Gamma(\lambda t)}\prod_{j_i=1}^{k_i}\Big(\frac{\lambda_{ij_i}}{a_i+\lambda_i}\Big)^{n_{ij_i}}\frac{1}{n_{ij_i}!},\ \bar{n}\ge \bar{0},
\end{equation}}
where $\lambda_i=\lambda_{i1}+\lambda_{i2}+\dots+\lambda_{ik_i}$ and $\eta_i=n_{i1}+n_{i2}+\dots+n_{ik_i}$.
	
On applying Proposition 7 of Kataria and Khandakar (2022b) and by using the independence of component processes of $\{\bar{M}(\bar{Z}(t))\}_{t\ge 0}$, its L\'evy measure is given by
\begin{equation}\label{Lev_MZ}
\Pi_{\bar{M}(\bar{Z}(t))}(A_1 \times A_2\times\dots\times A_q)=
\lambda\sum_{i=1}^{q}\sum_{n_i=1}^{\infty}\sum_{\Omega(k_i,n_i)}\Gamma(\eta_i)\mathbb{I}_{\{n_i\in A_i\}}\prod_{j_i=1}^{k_i}\Big(\frac{\lambda_{ij_i}}{a_i+\lambda_i}\Big)^{n_{ij_i}}\frac{1}{n_{ij_i}!}.
\end{equation}
	
Now, let $Y(t)=t+\lambda^{-1}B^- (\lambda t)$, where $B^- (\lambda t)$ is a negative binomial process with parameter $0<\theta<1$. For $u\in\mathbb{R}$, its characteristic function is given by
\begin{align}\label{chFun}
	\mathbb{E}\big(e^{\omega uY(t)}\big)&=\Big(\frac{(1-\theta)e^{\omega u\lambda^{-1}}}{1-\theta e^{\omega u\lambda^{-1}}}\Big)^{\lambda t}\nonumber\\
	&=\exp\Big(t\Big(\omega u+\sum_{k=1}^{\infty}(e^{\omega u\lambda^{-1}k}-1)\frac{\lambda\theta^k}{k}\Big)\Big)\nonumber\\
	&=\exp\Big(t\Big(\omega u+\int_{\mathbb{R}\setminus\{0\}}(e^{\omega ux}-1)\sum_{k=1}^{\infty}\frac{\lambda\theta^k}{k}\delta_{k/\lambda}(\mathrm{d}x)\Big)\Big).
\end{align}
On comparing \eqref{chFun} with \eqref{Lev-Khint}, we conclude that $\{Y(t)\}_{t\ge0}$ is a L\'evy process with unit drift and L\'evy measure
\begin{equation*}\label{Lev_Y}
	\Pi_{Y(t)}(\mathrm{d}x)=\lambda\sum_{k=1}^{\infty}\frac{\theta^k}{k}\delta_{k/\lambda} (\mathrm{d}x).
\end{equation*}

\begin{theorem}
The L\'evy measure of $\{\bar{\mathscr{M}}(t)\}_{t\geq0}$ is given by
{\small	\begin{align}\label{Lev1}
		\Pi_{\bar{\mathscr{M}}(t)}&(A_1 \times A_2\times\dots\times A_q)\nonumber\\
		&=\lambda\sum_{i=1}^{q}\sum_{n_i=1}^{\infty}\sum_{\Omega(k_i,n_i)}\Gamma(\eta_i)\mathbb{I}_{\{n_i\in A_i\}}\prod_{j_i=1}^{k_i}\Big(\frac{\lambda_{ij_i}}{a_i+\lambda_i}\Big)^{n_{ij_i}}\frac{1}{n_{ij_i}!}\nonumber\\
		&\hspace{2cm}+\lambda\sum_{\bar{n}\succ \bar{0}}\sum_{k=1}^{\infty}\frac{\theta^k}{k}\prod_{i=1}^{q}\Big(\frac{a_i}{a_i+\lambda_i}\Big)^{ k}(k)_{\eta_i}
		\sum_{\Omega(k_i,n_i)}\mathbb{I}_{\{n_i\in A_i\}}
		\prod_{j_i=1}^{k_i}\Big(\frac{\lambda_{ij_i}}{a_i+\lambda_i}\Big)^{n_{ij_i}}\frac{1}{n_{ij_i}!},
	\end{align}}
where $\lambda_i=\lambda_{i1}+\lambda_{i2}+\dots+\lambda_{ik_i}$ and $\eta_i=n_{i1}+n_{i2}+\dots+n_{ik_i}$.

\end{theorem}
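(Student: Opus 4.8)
The plan is to exploit the representation $\mathscr{M}_i(t)=M_i(G_i(t))=M_i\bigl(Z_i(t+\lambda^{-1}B^-(\lambda t))\bigr)$, so that $\{\bar{\mathscr{M}}(t)\}_{t\ge0}$ is obtained by subordinating the L\'evy process $\{\bar{M}(\bar{Z}(t))\}_{t\ge0}$ by the univariate L\'evy subordinator $\{Y(t)\}_{t\ge0}$ with $Y(t)=t+\lambda^{-1}B^-(\lambda t)$ applied simultaneously to all components. Concretely, $\bar{\mathscr{M}}(t)\eqd \bar{M}(\bar{Z}(Y(t)))$, since $G_i(t)=Z_i(Y(t))$ with the \emph{same} time $Y(t)$ feeding all the conditionally independent gamma components. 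Thus the problem reduces to: given a $\mathbb{R}^q$-valued (here $\mathbb{N}_0^q$-valued) L\'evy process $\bar{L}(t)=\bar{M}(\bar{Z}(t))$ with known L\'evy measure \eqref{Lev_MZ}, and an independent L\'evy subordinator $Y$ with unit drift and L\'evy measure $\Pi_{Y(t)}(\mathrm{d}x)=\lambda\sum_{k\ge1}\tfrac{\theta^k}{k}\delta_{k/\lambda}(\mathrm{d}x)$, compute the L\'evy measure of $\bar{L}(Y(t))$.

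The key tool is the classical subordination formula for L\'evy measures (Sato (1999), Theorem 30.1): if $Y$ has drift $b_0\ge0$ and L\'evy measure $\rho$, and $\bar{L}$ has transition probabilities $P_s(\mathrm{d}\bar y)$, then the subordinated process has L\'evy measure
\[
\Pi_{\bar{L}(Y(t))}(\mathrm{d}\bar y)=b_0\,\Pi_{\bar L(t)}(\mathrm{d}\bar y)+\int_{0}^{\infty}P_s(\mathrm{d}\bar y)\,\rho(\mathrm{d}s).
\]
Here $b_0=1$, which immediately produces the first sum in \eqref{Lev1} as the drift contribution $\Pi_{\bar M(\bar Z(t))}$ from \eqref{Lev_MZ}. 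For the integral term, since $\rho$ is purely atomic on $\{k/\lambda:k\ge1\}$ with mass $\lambda\theta^k/k$, the integral becomes $\sum_{k\ge1}\tfrac{\lambda\theta^k}{k}\,P_{k/\lambda}(\{\bar n\})$, where $P_{k/\lambda}(\{\bar n\})=h(\bar n,k/\lambda)$ is the state probability of $\bar M(\bar Z(t))$ at time $k/\lambda$ given by \eqref{stateMZ}. Substituting $t=k/\lambda$ into \eqref{stateMZ} gives $\prod_i (a_i/(a_i+\lambda_i))^{k}$, while $\Gamma(\eta_i+k)/\Gamma(k)=(k)_{\eta_i}$ by the Pochhammer identity \eqref{pochha}; the restriction to $\bar n\succ\bar 0$ arises because $h(\bar 0,k/\lambda)$ contributes nothing new to a L\'evy measure on $\mathbb{R}^q\setminus\{\bar 0\}$ (or more precisely only the terms with at least one $n_i\ge1$ survive once one removes the atom at the origin). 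Collecting terms and intersecting with $A_1\times\cdots\times A_q$ via the indicators $\mathbb{I}_{\{n_i\in A_i\}}$ yields exactly the second sum in \eqref{Lev1}.

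The main obstacle is justifying the simultaneous-subordination reduction rigorously: one must verify that $\{\bar M(\bar Z(t))\}_{t\ge0}$ is genuinely a L\'evy process on $\mathbb{R}^q$ (it is, as a vector of independent subordinated L\'evy processes, all evaluated at the common deterministic-to-random time $t$) and that composing with the independent subordinator $Y$ falls within the scope of Sato's theorem, which is stated for $\mathbb{R}^d$-valued L\'evy processes subordinated by a one-dimensional subordinator — this matches our setting since $Y$ is one-dimensional. A secondary technical point is checking that the resulting measure is a genuine L\'evy measure, i.e., $\int \min\{1,\|\bar y\|^2\}\,\Pi_{\bar{\mathscr M}(t)}(\mathrm{d}\bar y)<\infty$; this follows because both summands are dominated by the corresponding quantities for $\bar M(\bar Z(t))$ and for $Y$, which are already L\'evy processes. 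Once these structural facts are in place, the computation of the atoms is the routine substitution sketched above, and the theorem follows.
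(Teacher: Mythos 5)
Your proposal is correct and follows essentially the same route as the paper: both identify $\bar{\mathscr{M}}(t)$ as the L\'evy process $\bar{M}(\bar{Z}(\cdot))$ subordinated by $Y(t)=t+\lambda^{-1}B^-(\lambda t)$, invoke Theorem 30.1 of Sato (1999) to split the L\'evy measure into the unit-drift contribution $\Pi_{\bar{M}(\bar{Z}(t))}$ plus an integral of the state probabilities $h(\bar{n},\cdot)$ against the atomic measure $\lambda\sum_{k\ge1}\frac{\theta^k}{k}\delta_{k/\lambda}$, and then evaluate via $\Gamma(\eta_i+k)/\Gamma(k)=(k)_{\eta_i}$. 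Your additional remarks on verifying the structural hypotheses of Sato's theorem are sound but not developed further in the paper's own proof.
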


\begin{proof}
	By using \eqref{stateMZ}, \eqref{Lev_MZ} and Theorem 30.1 of Sato (1999), the L\'evy measure of $\{\bar{\mathscr{M}}(t)\}_{t\geq0}$ can be obtained as follows:
{\footnotesize	\begin{align*}
		\Pi_{\bar{\mathscr{M}}(t)}&(A_1 \times A_2\times\dots\times A_q)\\
		&=\Pi_{\bar{M}(\bar{Z}(t))}(A_1 \times A_2\times\dots\times A_q)
		+\int_{0}^{\infty}\sum_{\bar{n}\succ\bar{0}}h(\bar{n},x)\Big(\prod_{i=1}^{q}\mathbb{I}_{\{n_i\in A_i\}}\Big)\,\Pi_{Y(t)}(\mathrm{d}x)\\
		&=\lambda\sum_{i=1}^{q}\sum_{n_i=1}^{\infty}\sum_{\Omega(k_i,n_i)}\Gamma(\eta_i)\mathbb{I}_{\{n_i\in A_i\}}\prod_{j_i=1}^{k_i}\Big(\frac{\lambda_{ij_i}}{a_i+\lambda_i}\Big)^{n_{ij_i}}\frac{1}{n_{ij_i}!}\\
		&\hspace{2cm}+ \lambda\sum_{\bar{n}\succ \bar{0}}\sum_{k=1}^{\infty}\frac{\theta^k}{k}\prod_{i=1}^{q}
		\sum_{\Omega(k_i,n_i)}\Big(\frac{a_i}{a_i+\lambda_i}\Big)^{ k}\frac{\Gamma(\eta_i+ k)}{\Gamma(k)}
		\Big(\prod_{j_i=1}^{k_i}\Big(\frac{\lambda_{ij_i}}{a_i+\lambda_i}\Big)^{n_{ij_i}}\frac{1}{n_{ij_i}!}\Big)\mathbb{I}_{\{n_i\in A_i\}} ,
	\end{align*}}
which reduces to the required result.	
\end{proof}

\begin{remark}
On substituting $q=1$ in \eqref{Lev1}, we get the L\'evy measure of GCP time-changed with an independent gamma subordinator (see Kataria and Khandakar (2022b)). 
\end{remark}

\begin{proposition}
For $i=1$, $2$, $\dots,q$ and $l=1$, $2$, $\dots,q$, the covariance and the codifference of $\{M_i(G_i(t))\}_{t\geq0}$ and $\{M_l(G_l(t))\}_{t\geq0}$ are given by
{\footnotesize\begin{equation}\label{covvv1}
\operatorname{Cov}(M_i(G_i(t)),M_l(G_l(t)))=\sum_{j_i=1}^{k_i}j_i^2\lambda_{ij_i}\frac{\lambda t}{a_i(1-\theta)}\mathbb{I}_{\{i=j\}}+\sum_{j_i=1}^{k_i}\sum_{j_l=1}^{k_l}j_ij_l\frac{\lambda_{ij_i}\lambda_{lj_l}\lambda  t}{a_ia_l(1-\theta)^2}(\theta \mathbb{I}_{\{i\ne l\}}+\mathbb{I}_{\{i=l\}})
\end{equation}}
and 
{\footnotesize\begin{equation*}
\tau(M_i(G_i(t)),M_l(G_l(t)))=\lambda t\ln \Big(\frac{a_ia_l(1-\theta)}{(a_i-c_i)(a_l-c_l)-\theta a_ia_l}\Big)\mathbb{I}_{\{i\ne l\}}-\lambda t \ln \Big( \frac{a_ia_l(1-\theta)^2}{(a_i(1-\theta)-c_i)(a_l(1-\theta)-c_l)}\Big),
\end{equation*}}
respectively, where $c_i=-\sum_{j_i=1}^{k_i}\lambda_{ij_i}(1-e^{\omega j_i})$ and $c_l=-\sum_{j_l=1}^{k_l}\lambda_{lj_l}(1-e^{-\omega j_l})$.	
\end{proposition}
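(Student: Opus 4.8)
The plan is to route both identities through the conditioning structure of Section~3, so that everything is expressed in terms of already–established one–variable facts: the pgf \eqref{pgf_gcp} and moments of the GCP, the Laplace transform \eqref{LT_gamma} of the gamma process, the mean \eqref{Gi_mean} and the covariance of the components of $\bar{G}$ obtained earlier, and the joint Laplace--Stieltjes transform \eqref{Ltmg}. For the covariance, set $\mu_i=\sum_{j_i=1}^{k_i}j_i\lambda_{ij_i}$ and $\nu_i=\sum_{j_i=1}^{k_i}j_i^{2}\lambda_{ij_i}$, so that $\mathbb{E}(M_i(s))=s\mu_i$ and $\operatorname{Var}(M_i(s))=s\nu_i$ (read off from \eqref{pgf_gcp} and the L\'evy measure of the GCP). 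When $i\ne l$, conditioning on $\bar{G}(t)$ and using that $\{M_i(t)\}_{t\ge0}$ and $\{M_l(t)\}_{t\ge0}$ are independent of each other and of $\bar{G}$ gives $\mathbb{E}(\mathscr{M}_i(t)\mathscr{M}_l(t))=\mu_i\mu_l\,\mathbb{E}(G_i(t)G_l(t))$ and $\mathbb{E}(\mathscr{M}_i(t))=\mu_i\mathbb{E}(G_i(t))$, hence $\operatorname{Cov}(\mathscr{M}_i(t),\mathscr{M}_l(t))=\mu_i\mu_l\operatorname{Cov}(G_i(t),G_l(t))$; substituting $\operatorname{Cov}(G_i(t),G_l(t))$ from Section~3 produces the $\theta\,\mathbb{I}_{\{i\ne l\}}$ part. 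When $i=l$ the conditional–independence step is unavailable, so instead I use the law of total variance conditional on $G_i(t)$, obtaining $\operatorname{Var}(\mathscr{M}_i(t))=\nu_i\,\mathbb{E}(G_i(t))+\mu_i^{2}\operatorname{Var}(G_i(t))$, and substitute $\mathbb{E}(G_i(t))=\lambda t/(a_i(1-\theta))$ from \eqref{Gi_mean} together with $\operatorname{Var}(G_i(t))=\lambda t/(a_i^{2}(1-\theta)^{2})$; this gives the $\mathbb{I}_{\{i=l\}}$ terms and completes \eqref{covvv1}.

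For the codifference I start from the definition \eqref{codiff}. Conditioning on $G_i(t)$ and evaluating the pgf \eqref{pgf_gcp} at $u=e^{\omega}$ gives $\mathbb{E}(e^{\omega M_i(s)})=e^{s c_i}$ with $c_i$ exactly as in the statement, so that $\mathbb{E}(e^{\omega\mathscr{M}_i(t)})=\mathbb{E}(e^{c_iG_i(t)})=\big((1-\theta)a_i/((1-\theta)a_i-c_i)\big)^{\lambda t}$ by \eqref{LT_gamma} applied to $G_i(t)\sim\Gamma(\lambda t,(1-\theta)a_i)$; similarly $\mathbb{E}(e^{-\omega\mathscr{M}_l(t)})=\mathbb{E}(e^{c_lG_l(t)})=\big((1-\theta)a_l/((1-\theta)a_l-c_l)\big)^{\lambda t}$. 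For $i=l$ the difference $\mathscr{M}_i(t)-\mathscr{M}_l(t)$ vanishes identically, so $\mathbb{E}(e^{\omega(\mathscr{M}_i(t)-\mathscr{M}_l(t))})=1$ and \eqref{codiff} collapses to minus the sum of the last two logarithms, which is the asserted expression with the $\mathbb{I}_{\{i\ne l\}}$ term absent. For $i\ne l$, conditioning on $\bar{G}(t)$ and using the conditional independence of $M_i(G_i(t))$ and $M_l(G_l(t))$ gives $\mathbb{E}(e^{\omega(\mathscr{M}_i(t)-\mathscr{M}_l(t))})=\mathbb{E}(e^{c_iG_i(t)+c_lG_l(t)})$, which is the two--coordinate marginal of the transform \eqref{Ltmg} evaluated at $(-c_i,-c_l)$, namely $\big((1-\theta)a_ia_l/((a_i-c_i)(a_l-c_l)-\theta a_ia_l)\big)^{\lambda t}$; substituting the three transforms into \eqref{codiff} and combining the logarithms yields the stated formula.

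Most of the work is bookkeeping — collecting the powers of $(1-\theta)$, $a_i$ and $a_l$ — and the two points that genuinely need attention are: (i) keeping the $i=l$ and $i\ne l$ cases strictly separate, since the ``independence given $\bar{G}$'' reduction is valid only for $i\ne l$ (for $i=l$ one must pass through the conditional second moment of $M_i$, not its conditional mean squared); and (ii) checking that the transforms with the complex arguments $c_i,c_l$ lie in their domains of convergence. The latter follows because $\operatorname{Re} c_i=-\sum_{j_i}\lambda_{ij_i}(1-\cos j_i)\le 0$ (and likewise $\operatorname{Re} c_l\le 0$), so $e^{c_iG_i(t)+c_lG_l(t)}$ has modulus at most $1$ and \eqref{Ltmg}, a priori stated for real $\bar{s}>\bar{0}$, extends by analyticity to $\operatorname{Re} s_i\ge 0$; the same bound also justifies interchanging the conditional expectations with the outer expectation.
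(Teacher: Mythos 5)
Your proposal is correct and follows essentially the same route as the paper: both arguments split into the case $i=l$, where the variance is obtained from the standard subordination formula $\nu_i\,\mathbb{E}(G_i(t))+\mu_i^2\operatorname{Var}(G_i(t))$, and the case $i\ne l$, where conditional independence given the time change reduces everything to moments of $G_i(t)$ and $G_l(t)$; likewise the codifference is assembled from the same three characteristic functions computed by conditioning on the gamma clock. The only cosmetic differences are that you reuse $\operatorname{Cov}(G_i(t),G_l(t))$ and the joint Laplace--Stieltjes transform of $\bar{G}(t)$ where the paper re-sums the negative binomial series directly, and that you supply the (correct, and in the paper tacit) justification that $\operatorname{Re} c_i\le 0$ so the relevant transforms converge at the complex arguments.
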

\begin{proof}
For $i=l$, by using \eqref{cov1} and \eqref{Gi_mean}, we get
\begin{equation}\label{covv1}
\operatorname{Cov}(M_i(G_i(t)),M_l(G_l(t)))=\Big(\sum_{j_i=1}^{k_i}j_i\lambda_{ij_i}\Big)^2\frac{\lambda t}{(a_i(1-\theta))^2}+\sum_{j_i=1}^{k_i}j_i^2\lambda_{ij_i}\frac{\lambda t}{a_i(1-\theta)}.
\end{equation}
For $i\ne l$, we have
{\footnotesize	\begin{align}\label{mean_MG1}
	\mathbb{E}(M_i(G_i(t))& M_l(G_l(t)))\nonumber\\
	&=\sum_{n=0}^{\infty}\mathbb{E}\big(M_i(Z_i(t+\lambda^{-1}B^-(\lambda t))) M_l(Z_l(t+\lambda^{-1}B^-(\lambda t)))\big) (1-\theta)^{\lambda t}{n+\lambda t -1 \choose n}\theta^n\nonumber\\
		&=(1-\theta)^{\lambda t}\sum_{n=0}^{\infty}{n+\lambda t -1 \choose n}\theta^n \mathbb{E}(M_i(1))\mathbb{E}(M_l(1))\mathbb{E}(Z_i(t+\lambda^{-1}n))\mathbb{E}(Z_l(t+\lambda^{-1}n))\nonumber\\
		&=(1-\theta)^{\lambda t}\sum_{n=0}^{\infty}{n+\lambda t -1 \choose n}\theta^n \sum_{j_i=1}^{k_i}j_i\lambda_{ij_i}\sum_{j_l=1}^{k_l}j_l\lambda_{lj_l}\frac{(n+\lambda t)^2}{a_ia_l}\nonumber\\
		&=\sum_{j_i=1}^{k_i}\sum_{j_l=1}^{k_l}j_ij_l\lambda_{ij_i}\lambda_{lj_l}\frac{\lambda t(\lambda t+\theta)}{a_ia_l(1-\theta)^2}.
\end{align}}
Also, from \eqref{Gi_mean}, it follows that
\begin{equation}\label{mean_MG2}
	\mathbb{E}(M_i(G_i(t)))=\sum_{j_i=1}^{k_i}j_i\lambda_{ij_i}\frac{\lambda t}{a_i(1-\theta)}.	
\end{equation}
Thus, for $i\ne l$, from \eqref{mean_MG1} and \eqref{mean_MG2}, we get
\begin{equation}\label{covv2}
	\operatorname{Cov}(M_i(G_i(t)),M_l(G_l(t)))=\sum_{j_i=1}^{k_i}\sum_{j_l=1}^{k_l}j_ij_l\lambda_{ij_i}\lambda_{lj_l}\frac{\lambda t\theta}{a_ia_l(1-\theta)^2}.
\end{equation}
On combining \eqref{covv1} and \eqref{covv2}, we get \eqref{covvv1}.
	
From \eqref{codiff}, we have
\begin{equation}\label{codiff2}
	\tau(M_i(G_i(t)),M_l(G_l(t)))\coloneq\ln\mathbb{E}(e^{\omega(M_i(G_i(t))-M_l(G_l(t)))})-\ln\mathbb{E}(e^{\omega M_i(G_i(t))})-\ln\mathbb{E}(e^{-\omega M_l(G_l(t))}).
\end{equation}

Now, by using \eqref{pgf_gcp} and \eqref{LT_gamma}, we have
\begin{align}\label{co1}
\mathbb{E}\Big(e^{\omega M_i(G_i(t))}\Big)&=\mathbb{E}\Big(\mathbb{E}\Big(e^{\omega M_i(Z_i(t+\lambda^{-1}B^-(\lambda t)))}\big|B^-(\lambda t)\Big)\Big)\nonumber\\
&=\mathbb{E}\Big(\Big(1+\frac{1}{a_i}\sum_{j_i=1}^{k_i}\lambda_{ij_i}(1-e^{\omega j_i})\Big)^{-B^-(\lambda t)-\lambda t}\Big)\nonumber\\
&=\Big(\frac{a_i(1-\theta)}{a_i(1-\theta)+\sum_{j_i=1}^{k_i}\lambda_{ij_i}(1-e^{\omega j_i})}\Big)^{\lambda t}
\end{align}
where the last step follows from \eqref{pgf_negbin}.
Similarly, for $i\ne l$, we get
\begin{align*}
\mathbb{E}\Big(e^{\omega(M_i(G_i(t))-M_l(G_l(t)))}\Big)&=\mathbb{E}\Big(\mathbb{E}\Big(e^{\omega(M_i(G_i(t))-M_l(G_l(t)))}\big|B^-(\lambda t)\Big)\Big)\nonumber\\
&=\mathbb{E}\Big(\Big(\frac{a_ia_l}{(a_i-c_i)(a_l-c_l)}\Big)^{\lambda t+B^-(\lambda t)}\Big)\nonumber\\
&=\Big(\frac{a_ia_l(1-\theta)}{a_ia_l(1-\theta)-(a_ic_l+a_lc_i)+c_ic_l}\Big)^{\lambda t}.\label{codd2}
\end{align*}	
Finally, from \eqref{codiff2}, \eqref{co1} and \eqref{codd2}, we get the required codifference of $\{M_i(G_i(t))\}_{t\geq0}$ and $\{M_l(G_l(t))\}_{t\geq0}$.	
\end{proof}

\begin{proposition}
Let $|u_i|\leq 1$ for all $i=1,2,\dots,q$ and  $\bar{u}=(u_1,u_2,\dots,u_q)$. Then, the pgf of $\{\bar{\mathscr{M}}(t)\}_{t\ge 0}$ is given by
\begin{equation*}\label{pgf_MG}
\mathbf{G}_{\bar{\mathscr{M}}(t)}(\bar{u})=\mathbb{E}\Big(\prod_{i=1}^{q}u_i^{M_i(G_i(t))}\Big)=\Bigg(
\frac{1-\theta}{ \prod_{i=1}^{q}\big(1+\sum_{j_i=1}^{k_i}\frac{\lambda_{ij_i}}{a_i}(1-u_i^{j_i})\big)-\theta}\Bigg)^{\lambda t}.
\end{equation*}
\end{proposition}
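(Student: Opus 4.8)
The plan is to condition on the subordinator $\bar{G}(t)$ and collapse the conditional probability generating function to a joint Laplace–Stieltjes transform, which has already been computed in \eqref{Ltmg}.

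First, fix $u_i\in[0,1]$ and set $s_i\coloneqq\sum_{j_i=1}^{k_i}\lambda_{ij_i}(1-u_i^{j_i})\ge 0$. Since each $\{M_i(t)\}_{t\ge0}$ is a GCP, hence a L\'evy process, its pgf at a \emph{deterministic} time $z\ge0$ is, by \eqref{pgf_gcp}, $\mathbb{E}(u_i^{M_i(z)})=e^{-zs_i}$. Using that $\{\bar{M}(t)\}_{t\ge0}$ is independent of $\{\bar{G}(t)\}_{t\ge0}$, conditioning on $\bar{G}(t)$ and then on the single coordinate $G_i(t)$ gives $\mathbb{E}\bigl(u_i^{M_i(G_i(t))}\mid\bar{G}(t)\bigr)=e^{-s_iG_i(t)}$. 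Since the component GCPs $M_1,\dots,M_q$ are independent and jointly independent of $\bar{G}$, the conditional pgf of $\bar{\mathscr{M}}(t)$ factorizes:
\[
\mathbb{E}\Bigl(\textstyle\prod_{i=1}^{q}u_i^{M_i(G_i(t))}\,\Big|\,\bar{G}(t)\Bigr)=\prod_{i=1}^{q}e^{-s_iG_i(t)}=e^{-\bar{s}\cdot\bar{G}(t)}.
\]

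Next I would take expectations of the last display and invoke \eqref{Ltmg} with the particular choice $\bar{s}=(s_1,\dots,s_q)$ (each $s_i\ge0$, so the transform applies):
\[
\mathbf{G}_{\bar{\mathscr{M}}(t)}(\bar{u})=\mathbb{E}\bigl(e^{-\bar{s}\cdot\bar{G}(t)}\bigr)=\Bigl(\frac{1-\theta}{\prod_{i=1}^{q}(1+s_ia_i^{-1})-\theta}\Bigr)^{\lambda t},
\]
and substituting back $s_ia_i^{-1}=\sum_{j_i=1}^{k_i}\tfrac{\lambda_{ij_i}}{a_i}(1-u_i^{j_i})$ yields the claimed expression. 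Finally, both sides are analytic in each $u_i$ on the unit disc, so the identity, established for $u_i\in[0,1]$, extends to all $|u_i|\le1$.

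The computation is short; the only points needing care are bookkeeping rather than genuine obstacles: (i) that the conditional pgf of $M_i(G_i(t))$ is obtained by ``freezing'' the time variable, which rests on the independent-increments property of the GCP together with the independence $\{\bar{M}\}$ from $\{\bar{G}\}$; and (ii) that the argument $\bar{s}$ of \eqref{Ltmg} is nonnegative, which is why one first restricts to $u_i\in[0,1]$ and then continues analytically. If one prefers to avoid citing \eqref{Ltmg}, an equivalent route is to condition on $B^-(\lambda t)=n$, use conditional independence of the $G_i$'s together with \eqref{LT_gamma} to get the conditional pgf $A^{-\lambda t-n}$ with $A\coloneqq\prod_{i=1}^{q}(1+s_ia_i^{-1})$, and then sum over $n$ by recognizing $\sum_{n=0}^{\infty}\mathrm{Pr}\{B^-(\lambda t)=n\}A^{-n}=\mathbb{E}(A^{-B^-(\lambda t)})$ and applying the negative binomial pgf \eqref{pgf_negbin} at $A^{-1}$; this gives the same answer.
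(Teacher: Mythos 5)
Your proof is correct. Your primary route differs from the paper's in one organizational respect: you condition on the whole vector $\bar{G}(t)$, reduce the pgf to the joint Laplace--Stieltjes transform $\mathbb{E}(e^{-\bar{s}\cdot\bar{G}(t)})$ at $s_i=\sum_{j_i=1}^{k_i}\lambda_{ij_i}(1-u_i^{j_i})$, and then simply quote the already-established formula \eqref{Ltmg}. The paper instead conditions on $B^-(\lambda t)$, factorizes using the conditional independence of the $G_i$'s, applies the gamma Laplace transform \eqref{LT_gamma} to get $\prod_i(1+s_ia_i^{-1})^{-\lambda t-B^-(\lambda t)}$, and finishes with the negative binomial pgf \eqref{pgf_negbin} --- which is precisely the ``alternative route'' you sketch in your last paragraph. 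Your main argument buys economy (no recomputation of a transform the paper has already derived) and makes explicit two points the paper glosses over: that the reduction to the LST rests on the independence of $\{\bar{M}\}$ from $\{\bar{G}\}$ together with the L\'evy/pgf identity $\mathbb{E}(u_i^{M_i(z)})=e^{-zs_i}$ at deterministic $z$, and that the extension from $u_i\in[0,1]$ to $|u_i|\le 1$ is by analyticity (for which one should note the denominator cannot vanish, since $\operatorname{Re}(s_i)\ge 0$ gives $\bigl|\prod_i(1+s_ia_i^{-1})\bigr|\ge 1>\theta$). The paper's version buys self-containedness at the cost of repeating the computation behind \eqref{Ltmg}.
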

\begin{proof}Note that
	\begin{align*}
		\mathbb{E}\Big(\prod_{i=1}^{q}u_i^{M_i(G_i(t))}\Big)&=\mathbb{E}\Big(\mathbb{E}\Big(\prod_{i=1}^{q}u_i^{M_i(G_i(t))}\big|B^-(\lambda t)\Big)\Big)\\
			&=\mathbb{E}\Big(\prod_{i=1}^{q}\mathbb{E}\Big(u_i^{M_i(Z_i(t+\lambda^{-1}B^-(\lambda t)))}\Big|B^-(\lambda t)\Big)\Big)\\
		&=\mathbb{E}\Big(\prod_{i=1}^{q}\mathbb{E}\Big(\exp\Big(-Z_i(t+\lambda^{-1}B^-(\lambda t))\sum_{j_i=1}^{k_i}\lambda_{ij_i}(1-u_i^{j_i})
		\Big)\Big|B^-(\lambda t)\Big)\Big)\\
		&=\mathbb{E}\Big(\prod_{i=1}^{q}\Big(1+\frac{\sum_{j_i=1}^{k_i}\lambda_{ij_i}(1-u_i^{j_i})}{a_i}\Big)^{-\lambda t-B^-(\lambda t)}\Big),
	\end{align*}
	where the penultimate step follows on using \eqref{pgf_gcp}, and in the last step we have used \eqref{LT_gamma}. Finally, the required result follows by using \eqref{pgf_negbin}.
\end{proof}

\begin{remark}
	The pgf  $\mathbf{G}_{\bar{\mathscr{M}}(t)}(\bar{u})$ solves the following differential equation:
	\begin{equation*}
		\frac{\partial}{\partial t}\mathbf{G}_{\bar{\mathscr{M}}(t)}(\bar{u})
		=\lambda \mathbf{G}_{\bar{\mathscr{M}}(t)}(\bar{u})\ln \Bigg(\frac{1-\theta}{ \prod_{i=1}^{q}\big(1+\sum_{j_i=1}^{k_i}\frac{\lambda_{ij_i}}{a_i}(1-u_i^{j_i})\big)-\theta}\Bigg) 
	\end{equation*}
with $\mathbf{G}_{\bar{\mathscr{M}}(0)}(\bar{u})=1$.
\end{remark}

\begin{theorem}\label{state_prob1}
	The state probabilities $p_{\bar{\mathscr{M}}}(\bar{n},t)=\mathrm{Pr}\{\bar{\mathscr{M}}(t)=\bar{n}\}$, $\bar{n}\ge \bar{0}$ are given by
{\footnotesize	\begin{equation}\label{state_pr2}
				p_{\bar{\mathscr{M}}}(\bar{n},t)=(1-\theta)^{\lambda t}\sum_{h=0}^{\infty}\frac{\theta^h}{h!}\big(\lambda t\big)_h \prod_{i=1}^{q}\Big(\frac{a_i}{a_i+\lambda_i}\Big)^{h+\lambda t}\sum_{\Omega(k_i,n_i)}\big(h+\lambda t\big)_{\eta_i}\prod_{j_i=i}^{k_i}\Big(\frac{\lambda_{ij_i}}{a_i+\lambda_i}\Big)^{n_{ij_i}}				
				\frac{1}{n_{ij_i}!},
		\end{equation}}
	where
    $\lambda_i=\lambda_{i1}+\lambda_{i2}+\dots+\lambda_{ik_i}$ and $\eta_i=n_{i1}+n_{i2}+\dots+n_{ik_i}$ for all $i=1,2,\dots,q$. Here, $(x)_k$ denotes the Pochhammer symbol as defined in \eqref{pochha}.
\end{theorem}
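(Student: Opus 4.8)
The plan is to condition on the value of the shared negative binomial process. Recalling from \eqref{multi_gamma} that $\mathscr{M}_i(t)=M_i(G_i(t))=M_i\bigl(Z_i(t+\lambda^{-1}B^-(\lambda t))\bigr)$ with one and the same process $\{B^-(\lambda t)\}_{t\ge0}$ entering every coordinate, I would first note that conditioning on the event $\{B^-(\lambda t)=h\}$ decouples the components: given this event the variables $M_i\bigl(Z_i(t+\lambda^{-1}h)\bigr)$, $i=1,2,\dots,q$, are mutually independent, because the gamma processes $\{Z_i(t)\}_{t\ge0}$ are independent, the GCPs $\{M_i(t)\}_{t\ge0}$ are independent, and all of them are independent of $\{B^-(\lambda t)\}_{t\ge0}$. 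Hence, using the pmf \eqref{negbin} of $B^-(\lambda t)$,
\[
p_{\bar{\mathscr{M}}}(\bar{n},t)=\sum_{h=0}^{\infty}\mathrm{Pr}\{B^-(\lambda t)=h\}\prod_{i=1}^{q}\mathrm{Pr}\{M_i(Z_i(t+\lambda^{-1}h))=n_i\}.
\]

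The second step is to identify each factor in the product. Since $Z_i(t+\lambda^{-1}h)\sim\Gamma(\lambda t+h,a_i)$, the probability $\mathrm{Pr}\{M_i(Z_i(t+\lambda^{-1}h))=n_i\}$ is exactly the $i$-th factor in the product formula \eqref{stateMZ} for $h(\bar{n},\cdot)$ with the shape parameter $\lambda t$ replaced by $\lambda t+h$ — this uses the independence of the components of $\{\bar{M}(\bar{Z}(t))\}_{t\ge0}$ to read \eqref{stateMZ} factorwise. Substituting this, together with $\mathrm{Pr}\{B^-(\lambda t)=h\}=(1-\theta)^{\lambda t}\binom{h+\lambda t-1}{h}\theta^h$, turns $p_{\bar{\mathscr{M}}}(\bar{n},t)$ into a double sum over $h\ge0$ and over the sets $\Omega(k_i,n_i)$.

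Finally I would convert the Gamma-function ratios to Pochhammer symbols via \eqref{pochha}, namely $\binom{h+\lambda t-1}{h}=\Gamma(h+\lambda t)/\bigl(\Gamma(\lambda t)\,h!\bigr)=(\lambda t)_h/h!$ and $\Gamma(\eta_i+h+\lambda t)/\Gamma(h+\lambda t)=(h+\lambda t)_{\eta_i}$, pull the common factor $(1-\theta)^{\lambda t}$ outside the sum, and rearrange to arrive at \eqref{state_pr2}. The only steps needing a little care are the conditional-independence argument that licenses writing the conditional joint probability as a product, and the (minor) verification that the series in $h$ converges absolutely, which holds since $0<\theta<1$ and the remaining $h$-dependent factors grow sub-exponentially; I do not anticipate a substantive obstacle beyond this bookkeeping. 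An alternative would be to extract the coefficient of $\prod_{i=1}^{q}u_i^{n_i}$ from the pgf $\mathbf{G}_{\bar{\mathscr{M}}(t)}(\bar{u})$ of the preceding proposition, but the conditioning argument is more transparent.
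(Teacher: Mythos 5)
Your proposal is correct and follows essentially the same route as the paper: both arguments reduce to conditioning on the negative binomial variable and evaluating the resulting gamma mixture, the only organizational difference being that you condition on $B^-(\lambda t)=h$ first and reuse \eqref{stateMZ} with shape parameter $\lambda t+h$, whereas the paper integrates $\mathrm{Pr}\{M_1(x_1)=n_1,\dots,M_q(x_q)=n_q\}$ against the joint density \eqref{gamma_density} and recomputes the gamma integrals explicitly. The Pochhammer conversions you indicate are exactly those carried out in the paper, so no gap remains.
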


\begin{proof}
	For $t\geq0$, by using \eqref{jopmfgcp} and \eqref{gamma_density}, we have
	{\footnotesize
		\begin{align*}
			p_{\bar{\mathscr{M}}}(\bar{n},t)&=\int_{0}^{\infty}\int_{0}^{\infty}\dots\int_{0}^{\infty} \mathrm{Pr}\{M_1(x_1)=n_1,M_2(x_2)=n_2,\dots,M_q(x_q)=n_q\}g(\bar{x},t)\,\mathrm{d}x_1\,\mathrm{d}x_2\,\dots\mathrm{d}x_q\\
			&=\frac{(1-\theta)^{\lambda t}}{\Gamma(\lambda t)}\sum_{h=0}^{\infty}\frac{\theta^h}{h!(\Gamma(h+\lambda t))^{q-1}}\prod_{i=1}^{q}a_i^{h+\lambda t}\sum_{\Omega(k_i,n_i)}\Big(\prod_{j_i=i}^{k_i}\frac{\lambda_{ij_i}^{n_{ij_i}}}{n_{ij_i}!}\Big)\\
			&\hspace{6cm}\cdot\int_{0}^{\infty}x_i^{h+\lambda t+\sum_{j_i=1}^{k_i}n_{ij_i}-1}e^{-(a_i+\sum_{j_i=1}^{k_i}\lambda_{ij_i})x_i}\,\mathrm{d}x_i\\
			&=\frac{(1-\theta)^{\lambda t}}{\Gamma(\lambda t)}\sum_{h=0}^{\infty}\frac{\theta^h}{h!(\Gamma(h+\lambda t))^{q-1}}\prod_{i=1}^{q}a_i^{h+\lambda t}\sum_{\Omega(k_i,n_i)}\Big(\prod_{j_i=i}^{k_i}\frac{\lambda_{ij_i}^{n_{ij_i}}}{n_{ij_i}!}\Big)\\
			&\hspace{7cm}\cdot (a_i+\lambda_i)^{-h-\lambda t-\sum_{j_i=1}^{k_i}n_{ij_i}}\Gamma\Big(h+\lambda t+\sum_{j_i=1}^{k_i}n_{ij_i}\Big)\\
			&=\frac{(1-\theta)^{\lambda t}}{\Gamma(\lambda t)}\sum_{h=0}^{\infty}\frac{\theta^h}{h!(\Gamma(h+\lambda t))^{q-1}}\prod_{i=1}^{q}a_i^{h+\lambda t}\sum_{\Omega(k_i,n_i)}
			\frac{\Gamma(h+\lambda t+\sum_{j_i=1}^{k_i}n_{ij_i})}{(a_i+\lambda_i)^{h+\lambda t}}\prod_{j_i=1}^{k_i}\Big(\frac{\lambda_{ij_i}}{a_i+\lambda_i}\Big)^{n_{ij_i}}\frac{1}{n_{ij_i}!}.		
	\end{align*}}
	This completes the proof.
\end{proof}
\begin{remark}
On substituting $k_1=k_2=1$, $q=2$ and $a_1=a_2=a$ in Theorem \ref{state_prob1}, it reduces to the state probabilities of bivariate Poisson process time-changed by an independent bivariate gamma subordinator (see Meoli (2025), Theorem 3.4).
\end{remark}

Next, we discuss an application of the obtained results to a shock model.
\subsection{An application to a shock model}
Let us consider a system which is exposed to $q$ distinct types of shocks. Its random failure time is denoted by a non-negative, absolutely continuous random variable $T$. The component processes of $\{\bar{\mathscr{M}}(t)\}_{t\geq0}$ which is defined in \eqref{timechn} are used to model the arrival of shocks. Let $S$ denote a random threshold which takes values in $\mathbb{N}$, and is independent of the multivariate process $\{\bar{\mathscr{M}}(t)\}_{t\geq0}$. We assume that the system fails once the total number of shocks reaches $S$. For $i=1,2,\dots,q$, the cause of failure due to shock of type $i$ is denoted by $C=i$, where $C$ is an integer valued random variable. Thus, $T$ is the first hitting time of $\mathscr{M}_1(t)+\mathscr{M}_2(t)+\dots+\mathscr{M}_q(t)$, that is,
	\begin{equation*}
		T= \inf \{t\geq0 : \mathscr{M}_1(t)+\mathscr{M}_2(t)+\dots+\mathscr{M}_q(t)\ge S\}.
	\end{equation*}
	Its pdf is given by 
	\begin{equation*}
		f_T(t)=\sum_{i=1}^{q}f_i(t),\  t\geq0,
	\end{equation*}
	where $f_i(t)$ denotes the sub-density defined as 
	\begin{equation*}
		f_i(t)=\frac{\mathrm{d}}{\mathrm{dt}} \mathrm{Pr}\{T\leq t, C=i\}, \, i\in\{1,2,\dots,q\}.
	\end{equation*}
	Thus, the pmf of $C$ can be obtained as 
	\begin{equation*}
		\mathrm{Pr}\{C=i\}=\int_{0}^{\infty}f_i(t)\,\mathrm{dt}, \  i\in\{1,2,\dots,q\}.
	\end{equation*}
	Also, the pmf and the survival function of random threshold $S$ are given by
	\begin{equation*}
		p_S(k)=P\{S=k\},\ k\in\mathbb{N}
	\end{equation*} 
	and
	\begin{equation*}
		\bar{F}_S(k)=P\{S>k\},\, k\in\mathbb{N}_0,
	\end{equation*} 
	respectively. Note that the survival function of random failure time $T$ is defined as $\bar{F}_T(t)=\mathrm{Pr}\{T> t\}$, $t\ge 0$. It is given by 
	\begin{equation}\label{surv}
		\bar{F}_T(t)=\sum_{k=0}^{\infty}\bar{F}_S(k)\sum_{n_1+n_2+\dots+n_q=k}p_{\bar{\mathscr{M}}}(\bar{n},t), \, t\geq0,  
	\end{equation}
with $\bar{F}_S(0)=1.$

For $\bar{n}\ge \bar{0}$, the intensity of occurrence of shock of type $r$ due to a jump of size $l_r$ is given by $R_{rl_r}$. It is known as the hazard rate which is defined as
{\footnotesize	\begin{equation}\label{hazard}
R_{rl_r}(\bar{n};t)=\lim_{h\to 0+}\frac{1}{h}\mathrm{Pr}\Big(\mathscr{M}_r(t+h)=n_r +l_r,\cap_{\substack{k=1\\ k\neq r}}^q\{\mathscr{M}_k(t+h)=n_k\}\big|\cap_{k=1}^{q}\mathscr{M}_k(t)=n_k \Big),
\end{equation}}
where $r=1,2,\dots,q$ and $l_r=1,2,\dots,k_r$.
		
\begin{theorem}
For $r=1,2,\dots,q$ and $l_r=1,2,\dots,k_r$, the hazard rate $R_{rl_r}(\bar{n};t)$, $\bar{n}\ge \bar{0}$ is given by
	\begin{equation}\label{hazard2}
		R_{rl_r}(\bar{n};t)=\frac{A(\theta,\bar{a},t)}{p_{\bar{\mathscr{M}}}(\bar{n},t)},\ t\geq 0.
	\end{equation}
	Here, $p_{\bar{\mathscr{M}}}(\bar{n},t)$ is the pmf of multivariate process $\{\bar{\mathscr{M}}(t)\}_{t\ge 0}$ and {\scriptsize \begin{align*}
		A(\theta,\bar{a},t)&=(1-\theta)^{\lambda t}\sum_{h_1=0}^{\infty}\sum_{h_2=0}^{\infty}
		\frac{\lambda \theta^{h_1+h_2}}{h_1!h_2!}\sum_{\Omega(k_r,l_r)}\Big(\prod_{j_r=1}^{k_r}\frac{\lambda_{rj_r}^{l_{rj_r}}}{l_{rj_r}!}\Big)
		\Big(\prod_{i=1}^{q}a_i^{h_1+h_2+\lambda t}\sum_{\Omega(k_i,n_i)}\prod_{j_i=1}^{k_i}\frac{\lambda_{ij_i}^{n_{ij_i}}}{n_{ij_i}!}\Big)\nonumber\\ &\cdot(a_r+\lambda_r)^{-h_1-h_2-\lambda t-\eta_r-\xi_r}(h_1+\xi_r-1)!\big(\lambda t\big)_{h_2+\eta_r}
		\prod_{\substack{i=1\\i\neq r}}^{q}\sum_{\Omega(k_i,n_i)}(a_i+\lambda_i)^{-h_1-h_2-\lambda t-\eta_i}\big(h_2+\lambda t\big)_{\eta_i},
	\end{align*}}
	where $\xi_r=\sum_{j_r=1}^{k_r}l_{rj_r}$, $\lambda_i=\sum_{j_i=1}^{k_i}\lambda_{ij_i}$ and $\eta_i=\sum_{j_i=1}^{k_i}n_{ij_i}$.
\end{theorem}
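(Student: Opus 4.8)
The plan is to compute the hazard rate directly from its definition \eqref{hazard}, using the fact that $\{\bar{\mathscr{M}}(t)\}_{t\ge 0}$ is a L\'evy process (being a subordination of a L\'evy process by another L\'evy process), so its increments over $[t,t+h]$ are independent of $\{\bar{\mathscr{M}}(t)\}_{t\ge 0}$ and stationary. This reduces the conditional probability in \eqref{hazard} to $\mathrm{Pr}\{\bar{\mathscr{M}}(h)=\bar{e}_{r,l_r}\}$ divided by $h$, where $\bar{e}_{r,l_r}$ is the vector with a jump of total size $l_r$ in coordinate $r$ and zero elsewhere, in the limit $h\to 0^+$. By definition of the L\'evy measure, $\lim_{h\to 0^+}h^{-1}\mathrm{Pr}\{\bar{\mathscr{M}}(h)\in A\}=\Pi_{\bar{\mathscr{M}}(t)}(A)$ for sets $A$ bounded away from $\bar 0$; however, here I do not want the whole measure of $\{n_r+l_r\}\times\{n_k\}_{k\ne r}$ but rather the piece corresponding to a specified decomposition $\Omega(k_r,l_r)$ of the jump $l_r$ into sub-jumps. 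So the actual computation I would carry out is not via the L\'evy measure formula \eqref{Lev1} directly but via the explicit state-probability representation.

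The cleaner route, which I would pursue, is to use the conditional (Poisson-governed) structure. Condition on the gamma subordinator increment and on $B^-(\lambda h)$: given $B^-(\lambda h)=m$, each coordinate process $M_i$ is evaluated at $Z_i(h+\lambda^{-1}m)$, and conditionally on the $Z_i$ the coordinates are independent GCPs. For small $h$, the negative binomial increment contributes $m=0$ with probability $1-O(h)$ and $m=1$ with probability $\lambda\theta h+o(h)$ (from \eqref{negbin} with $t$ replaced by $h$), while higher $m$ are $o(h)$. In the $m=0$ branch the subordinator increments $Z_i(h)$ are $\Gamma(\lambda h,a_i)$, whose density near $0$ behaves like $\frac{a_i^{\lambda h}}{\Gamma(\lambda h)}x_i^{\lambda h-1}e^{-a_ix_i}$; the mass that produces a positive jump in coordinate $r$ is $O(h)$ per coordinate, so for a single positive jump in coordinate $r$ only one coordinate's subordinator can have moved. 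Carefully expanding, the $h^{-1}$ limit picks out exactly the two families of terms appearing in the L\'evy measure \eqref{Lev1}: the first from the $m=0$, single-coordinate-moved branch, and the second from the $m=1$ branch where all coordinates' subordinators jump by a macroscopic amount. Summing over the decompositions $\Omega(k_r,l_r)$ of $l_r$ and over the decompositions $\Omega(k_i,n_i)$ of the already-accumulated counts $n_i$, together with the series over the two negative-binomial-type indices $h_1,h_2$ coming from the two branches, reproduces the claimed numerator $A(\theta,\bar a,t)$.

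Concretely, the steps in order: (i) invoke stationarity and independence of increments of $\{\bar{\mathscr{M}}(t)\}_{t\ge 0}$ to rewrite \eqref{hazard} as $\lim_{h\to 0^+}h^{-1}\mathrm{Pr}\{\mathscr{M}_r(t+h)-\mathscr{M}_r(t)=l_r,\ \mathscr{M}_k(t+h)-\mathscr{M}_k(t)=0\ \forall k\ne r\mid \bar{\mathscr{M}}(t)=\bar n\}$, then cancel against $p_{\bar{\mathscr{M}}}(\bar n,t)$ so that the quantity equals $\big(\text{numerator}\big)/p_{\bar{\mathscr{M}}}(\bar n,t)$; (ii) write the joint probability $\mathrm{Pr}\{\bar{\mathscr{M}}(t)=\bar n,\ \bar{\mathscr{M}}(t+h)-\bar{\mathscr{M}}(t)=\bar e_{r,l_r}\}$ by conditioning on $B^-(\lambda t)$, $B^-(\lambda (t+h))-B^-(\lambda t)$ and on the gamma increments, using \eqref{gamma}, \eqref{negbin}, \eqref{jopmfgcp} exactly as in the proof of Theorem \ref{state_prob1}; (iii) perform the Gamma integrals in the $x_i$ variables (which produce the Beta-type factors $(a_i+\lambda_i)^{-(\cdots)}\Gamma(\cdots)$), keep both the $m'=0$ and $m'=1$ contributions of the negative-binomial increment to order $h$, discard $o(h)$, and divide by $h$ and let $h\to 0^+$; (iv) relabel the two resulting summation indices as $h_1,h_2$ and collect the Pochhammer symbols via \eqref{pochha}, matching the stated $A(\theta,\bar a,t)$.

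The main obstacle is bookkeeping in step (iii): one must correctly track which coordinate's subordinator is allowed to have a "small" increment (contributing the $\Gamma(\lambda h)^{-1}x^{\lambda h-1}$ factor that is $O(h)$ after integration against the GCP transition probability for a jump) versus which are at the pre-existing values, and simultaneously separate the two negative-binomial branches so that the factorials $(h_1+\xi_r-1)!$ and the Pochhammer symbols $(\lambda t)_{h_2+\eta_r}$, $(h_2+\lambda t)_{\eta_i}$ emerge with the right arguments. A convenient way to avoid the most painful asymptotics is to note that $\Pi_{\bar{\mathscr{M}}(t)}$ in \eqref{Lev1} already isolates exactly these two branches, so one can instead \emph{refine} \eqref{Lev1}: decompose each $\mathbb{I}_{\{n_i\in A_i\}}$-weighted term over $\Omega(k_i,n_i)$ and identify, in the limit definition of the hazard rate, $R_{rl_r}(\bar n;t)\,p_{\bar{\mathscr M}}(\bar n,t)$ with the joint "rate" of a jump of profile $\bar e_{r,l_r}$ superimposed on the state $\bar n$ — which is precisely the product of the relevant piece of the L\'evy measure with the transition structure, yielding $A(\theta,\bar a,t)$ after the Gamma integrals. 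Either way the computation is routine once the small-$h$ expansion of the negative binomial increment is in hand; I do not anticipate any conceptual difficulty beyond careful index management.
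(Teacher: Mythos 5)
Your overall strategy in steps (i)--(iv) is the same as the paper's: rewrite \eqref{hazard} as a two-time joint probability divided by $(\tau-t)\,p_{\bar{\mathscr{M}}}(\bar n,t)$, condition on the negative binomial variables and the gamma increments, carry out the Gamma integrals, and pass to the limit. However, your small-$h$ expansion of the negative binomial increment contains a concrete error that would produce the wrong numerator. From \eqref{negbin}, for every $m\ge 1$ one has $\mathrm{Pr}\{B^-(\lambda h)=m\}=(1-\theta)^{\lambda h}\binom{m+\lambda h-1}{m}\theta^m=\tfrac{\lambda\theta^m}{m}\,h+o(h)$, so \emph{all} branches $m\ge 1$ contribute at order $h$, not just $m=1$; your claim that ``higher $m$ are $o(h)$'' would discard every $h_1\ge 2$ term of $A(\theta,\bar a,t)$ (equivalently, all the atoms $\delta_{k/\lambda}$, $k\ge 2$, of $\Pi_{Y(t)}$). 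The paper avoids this trap by never expanding asymptotically: it computes the exact joint probability for $\tau>t$ using the full pdf $g(\cdot,\tau-t)$ of the subordinator increment (whose internal negative binomial sum becomes the index $h_1$), obtains the factor $(\lambda(\tau-t))_{h_1+\xi_r}$ from the Beta-type integral, and only then divides by $\tau-t$ and lets $\tau\to t$, which yields $\lambda(h_1+\xi_r-1)!$ termwise for every $h_1$.

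A second remark: the route you abandoned at the outset is in fact both valid and cleaner than what you (and the paper) pursue. Since $\{\bar{\mathscr{M}}(t)\}_{t\ge0}$ is a compound-Poisson-type L\'evy process, the conditional probability in \eqref{hazard} equals $\mathrm{Pr}\{\bar{\mathscr{M}}(h)=\bar e_{r,l_r}\}$ and the hazard rate is exactly $\Pi_{\bar{\mathscr{M}}(t)}(\{l_r\}\times\prod_{k\ne r}\{0\})$, independent of $\bar n$ and $t$. Your stated reason for rejecting this --- that you need only a ``piece'' of the measure corresponding to a specified decomposition --- is a misreading: $A(\theta,\bar a,t)$ sums over all of $\Omega(k_r,l_r)$, so it is the full singleton mass. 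Indeed, using $(\lambda t)_{h_2+\eta_r}=(\lambda t)_{h_2}(h_2+\lambda t)_{\eta_r}$ one checks that the $h_2$-sum in $A$ factors out as exactly $p_{\bar{\mathscr{M}}}(\bar n,t)$, so the theorem's ratio collapses to the L\'evy measure of the singleton read off from \eqref{Lev1} with $n_r=l_r$ and $n_k=0$ for $k\ne r$. Had you carried that observation through, you would have had a complete and shorter proof; as written, the proposal is a plan whose decisive computation is both deferred and, in the one place where you commit to specifics, incorrect.
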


\begin{proof}
	For $r=1,2,\dots,q$, we have the following equivalent form of \eqref{hazard}:
	{\footnotesize	\begin{equation}\label{lim}
			R_{rl_r}(\bar{n};t)=\lim_{\tau\to t}\frac{\mathrm{Pr}\Big(\mathscr{M}_r(\tau)=n_r +l_r,\cap_{\substack{k=1\\ k\neq r}}^q\{\mathscr{M}_k(\tau)=n_k\},\cap_{k=1}^{q}\{\mathscr{M}_k(t)=n_k \} \Big)}{(\tau-t)p_{\bar{\mathscr{M}}}(\bar{n},t)},
	\end{equation}}
	where $\bar{n}\in \mathbb{N}_0^q$.
	
	By using the independent and stationary increments property of the component processes of $\{\bar{G}(t)\}_{t\geq0}$ and $\{\bar{M}(t)\}_{t\geq0}$, we have
{\scriptsize	\begin{align}
		&\mathrm{Pr}\Big(\mathscr{M}_r(\tau)=n_r +l_r,\cap_{\substack{k=1\\ k\neq r}}^q\{\mathscr{M}_k(\tau)=n_k\},\cap_{k=1}^{q}\{\mathscr{M}_k(t)=n_k \} \Big)\nonumber\\
		&=\int_{x_{11}=0}^{\infty}\int_{x_{12}=0}^{x_{11}}\dots \int_{x_{q1}=0}^{\infty}\int_{x_{q2}=0}^{x_{q1}}\mathrm{Pr}\Big(M_r(x_{r1})=n_r+l_r,\cap_{\substack{k=1\\k\neq r}}^q\{M_k(x_{k1})=n_k\},\cap_{k=1}^{q}\{M_k(x_{k2})=n_k\}\Big)\nonumber\\
		&\hspace{8.5cm}\cdot\mathrm{Pr}\Big(\cap_{k=1}^{q}\{G_k(\tau)\in\mathrm{d}x_{k1}\},\cap_{k=1}^{q}\{G_k(t)\in\mathrm{d}x_{k2}\}\big)\nonumber\\
		&=\int_{x_{11}=0}^{\infty}\int_{x_{12}=0}^{x_{11}}\dots \int_{x_{q1}=0}^{\infty}\int_{x_{q2}=0}^{x_{q1}}\mathrm{Pr}\Big(M_r(x_{r1})=n_r+l_r,\cap_{\substack{k=1\\k\neq r}}^q\{M_k(x_{k1})=n_k\},\cap_{k=1}^{q}\{M_k(x_{k2})=n_k\}\Big)\nonumber\\
		&\hspace{2.5cm}\cdot g((x_{11}-x_{12},x_{21}-x_{22},\dots,x_{q1}-x_{q2}), \tau-t)\, g((x_{12},x_{22},\dots,x_{q2}), t)\,\mathrm{d}x_{q2}\,\mathrm{d}x_{q1}\dots\mathrm{d}x_{12}\,\mathrm{d}x_{11}\nonumber\\
		&=\int_{x_{11}=0}^{\infty}\int_{x_{12}=0}^{x_{11}}\dots \int_{x_{q1}=0}^{\infty}\int_{x_{q2}=0}^{x_{q1}}\mathrm{Pr}\Big(M_r(x_{r1}-x_{r2})=l_r,\cap_{\substack{k=1\\ k\neq r}}^q\{M_k(x_{k1}-x_{k2})=0\}\Big)\mathrm{Pr}\big\{\cap_{k=1}^{q}M_k(x_{k2})=n_k\big\}\nonumber\\
		&\hspace{2.5cm}\cdot g((x_{11}-x_{12},x_{21}-x_{22},\dots,x_{q1}-x_{q2}), \tau-t)\, g((x_{12},x_{22},\dots,x_{q2}), t)\,\mathrm{d}x_{q2}\,\mathrm{d}x_{q1}\dots\mathrm{d}x_{12}\,\mathrm{d}x_{11},\nonumber
	\end{align}}
	where we have used the independence of the component processes of $\{\bar{M}(t)\}_{t\geq0}$.
	
	By using \eqref{jopmfgcp} and \eqref{gamma_density}, we have
{\footnotesize	\begin{align*}
		&\mathrm{Pr}\Big(\mathscr{M}_r(\tau)=n_r +l_r,\cap_{\substack{k=1\\ k\neq r}}^q\{\mathscr{M}_k(\tau)=n_k\},\cap_{k=1}^{q}\{\mathscr{M}_k(t)=n_k \} \Big)\nonumber\\
		&=\int_{x_{11}=0}^{\infty}\int_{x_{12}=0}^{x_{11}}\dots \int_{x_{q1}=0}^{\infty}\int_{x_{q2}=0}^{x_{q1}}
		\Big(\sum_{\Omega(k_r,l_r)}\prod_{j_r=1}^{k_r}\frac{(\lambda_{rj_r}(x_{r1}-x_{r2}))^{l_{rj_r}}}{l_{rj_r}!}\Big)\nonumber\\
		&\cdot\Big(\prod_{i=1}^{q}\sum_{\Omega(k_i,n_i)}\prod_{j_i=1}^{k_i}\frac{(\lambda_{ij_i}x_{i2})^{n_{ij_i}}}{n_{ij_i}!}e^{-\lambda_{ij_i}x_{i1}}\Big)
		\frac{(1-\theta)^{\lambda \tau}}{\Gamma(\lambda (\tau-t))\Gamma(\lambda t)}\sum_{h_1=0}^{\infty}\sum_{h_2=0}^{\infty}
		\frac{\theta^{h_1+h_2}}{h_1!h_2!(\Gamma(h_1+\lambda (\tau-t))\Gamma(h_2+\lambda t))^{q-1}}\nonumber\\		
		&\hspace{4.2cm}\cdot\Big(\prod_{i=1}^{q}a_i^{h_1+h_2+\lambda\tau}(x_{i1}-x_{i2})^{h_1+\lambda (\tau-t)-1}x_{i2}^{h_2+\lambda t-1}e^{-a_ix_{i1}}\Big)\,\mathrm{d}x_{q2}\,\mathrm{d}x_{q1}\dots\,\mathrm{d}x_{12}\,\mathrm{d}x_{11}\nonumber\\
		&=\frac{(1-\theta)^{\lambda \tau}}{\Gamma(\lambda (\tau-t))\Gamma(\lambda t)}\sum_{h_1=0}^{\infty}\sum_{h_2=0}^{\infty}
		\frac{\theta^{h_1+h_2}}{h_1!h_2!(\Gamma(h_1+\lambda (\tau-t))\Gamma(h_2+\lambda t))^{q-1}}\nonumber\\
		&\hspace{1cm}\cdot\sum_{\Omega(k_r,l_r)}\Big(\prod_{j_r=1}^{k_r}\frac{\lambda_{rj_r}^{l_{rj_r}}}{l_{rj_r}!}\Big)
		\Big(\prod_{i=1}^{q}a_i^{h_1+h_2+\lambda\tau}\sum_{\Omega(k_i,n_i)}\prod_{j_i=1}^{k_i}\frac{\lambda_{ij_i}^{n_{ij_i}}}{n_{ij_i}!}\Big)\int_{x_{11}=0}^{\infty}\int_{x_{12}=0}^{x_{11}}\dots \int_{x_{q1}=0}^{\infty}\int_{x_{q2}=0}^{x_{q1}}
		(x_{r1}-x_{r2})^{\xi_r}\nonumber\\
		&\hspace{3.7cm}\cdot\Big(\prod_{i=1}^{q}x_{i2}^{\eta_i+h_2+\lambda t-1}(x_{i1}-x_{i2})^{h_1+\lambda(\tau-t)-1}e^{-(a_i+\lambda_i)x_{i1}}\Big)\,\mathrm{d}x_{q2}\,\mathrm{d}x_{q1}\dots\,\mathrm{d}x_{12}\,\mathrm{d}x_{11}\nonumber\\
		&=\frac{(1-\theta)^{\lambda \tau}}{\Gamma(\lambda (\tau-t))\Gamma(\lambda t)}\sum_{h_1=0}^{\infty}\sum_{h_2=0}^{\infty}
		\frac{\theta^{h_1+h_2}}{h_1!h_2!(\Gamma(h_1+\lambda (\tau-t))\Gamma(h_2+\lambda t))^{q-1}}
		\sum_{\Omega(k_r,l_r)}\Big(\prod_{j_r=1}^{k_r}\frac{\lambda_{rj_r}^{l_{rj_r}}}{l_{rj_r}!}\Big)\nonumber\\
		&\hspace{.8cm}\cdot\Big(\prod_{i=1}^{q}a_i^{h_1+h_2+\lambda\tau}\sum_{\Omega(k_i,n_i)}\prod_{j_i=1}^{k_i}\frac{\lambda_{ij_i}^{n_{ij_i}}}{n_{ij_i}!}\Big)
		(a_r+\lambda_r)^{-h_1-h_2-\lambda\tau-\eta_r-\xi_r}\Gamma(h_1+\lambda(\tau-t)+\xi_r)\Gamma(h_2+\lambda t+\eta_r)\nonumber\\
		&\hspace{6.4cm}\cdot\prod_{\substack{i=1\\i\neq r}}^{q}(a_i+\lambda_i)^{-h_1-h_2-\lambda\tau-\eta_i}\Gamma(h_1+\lambda(\tau-t))\Gamma(h_2+\lambda t+\eta_i)\nonumber\\
		&=(1-\theta)^{\lambda \tau}\sum_{h_1=0}^{\infty}\sum_{h_2=0}^{\infty}
		\frac{\theta^{h_1+h_2}}{h_1!h_2!}\sum_{\Omega(k_r,l_r)}\Big(\prod_{j_r=1}^{k_r}\frac{\lambda_{rj_r}^{l_{rj_r}}}{l_{rj_r}!}\Big)
		\Big(\prod_{i=1}^{q}a_i^{h_1+h_2+\lambda\tau}\sum_{\Omega(k_i,n_i)}\prod_{j_i=1}^{k_i}\frac{\lambda_{ij_i}^{n_{ij_i}}}{n_{ij_i}!}\Big)\nonumber\\ &\cdot(a_r+\lambda_r)^{-h_1-h_2-\lambda\tau-\eta_r-\xi_r}\big(\lambda(\tau-t)\big)_{h_1+\xi_r}\big(\lambda t\big)_{h_2+\eta_r}
		\prod_{\substack{i=1\\i\neq r}}^{q}\sum_{\Omega(k_i,n_i)}(a_i+\lambda_i)^{-h_1-h_2-\lambda\tau-\eta_i}\big(h_2+\lambda t\big)_{\eta_i},\label{num}
	\end{align*}}
	where the penultimate step follows on using formula 3.191.1 of Gradshteyn and Ryzhik (2014). Finally, on substituting \eqref{state_pr2} and \eqref{num} in \eqref{lim}, we get the required result.
\end{proof}

\begin{remark}
For $q=2$, $k_1=k_2=1$ and $a_1=a_2=a$, \eqref{hazard2} reduces to the hazard rates obtained in Theorem 3.5 of Meoli (2025).
\end{remark}

	\begin{remark}
		For $r=1,2,\dots,q$, $l_r=1,2,\dots,k_r$ and $t\ge 0$, the failure sub-densities are given by (see Kataria and Dhillon (2025), Eq. (5.4))
\begin{equation}\label{sub-den}
f_r(t)=\sum_{k=1}^{\infty}p_S(k)\sum_{k-k_r\leq\sum_{i=1}^{q}n_i\leq k-1}\mathrm{Pr}\Big(\cap_{i=1}^{q}\{\mathscr{M}_i(t)=n_i\}\big)\sum_{l_r=k-\sum_{i=1}^{q}n_i}^{k_r}R_{rl_r}(\bar{n};t).
\end{equation}
		
Now, on substituting \eqref{hazard2} in \eqref{sub-den}, we get
		{\footnotesize \begin{align*}
			f_r(t)&=(1-\theta)^{\lambda t}\sum_{k=1}^{\infty}p_S(k)\sum_{k-k_r\leq\sum_{i=1}^{q}n_i\leq k-1}\,\,\sum_{l_r=k-\sum_{i=1}^{q}n_i}^{k_r}
				\sum_{h_1=0}^{\infty}\sum_{h_2=0}^{\infty}
				\frac{\lambda \theta^{h_1+h_2}}{h_1!h_2!}\\
				&\hspace{1cm} \cdot\sum_{\Omega(k_r,l_r)}\Big(\prod_{j_r=1}^{k_r}\frac{\lambda_{rj_r}^{l_{rj_r}}}{l_{rj_r}!}\Big)
				\Big(\prod_{i=1}^{q}a_i^{h_1+h_2+\lambda t}\sum_{\Omega(k_i,n_i)}\prod_{j_i=1}^{k_i}\frac{\lambda_{ij_i}^{n_{ij_i}}}{n_{ij_i}!}\Big)(a_r+\lambda_r)^{-h_1-h_2-\lambda t-\eta_r-\xi_r}\nonumber\\ 
				&\hspace{1cm} \cdot(h_1+\xi_r-1)!\big(\lambda t\big)_{h_2+\eta_r}
				\prod_{\substack{i=1\\i\neq r}}^{q}\sum_{\Omega(k_i,n_i)}(a_i+\lambda_i)^{-h_1-h_2-\lambda t-\eta_i}\big(h_2+\lambda t\big)_{\eta_i}.
			\end{align*}}
	\end{remark}
	
	\begin{remark}
The survival function of failure time $T$ is given by
{\scriptsize		\begin{equation}\label{surv2}
\bar{F}_T(t)=(1-\theta)^{\lambda t}\sum_{k=0}^{\infty}\bar{F}_S(k)\sum_{n_1+n_2+\dots+n_q=k}\sum_{h=0}^{\infty}\frac{\theta^h}{h!}\big(\lambda t\big)_h \prod_{i=1}^{q}\Big(\frac{a_i}{a_i+\lambda_i}\Big)^{h+\lambda t}\sum_{\Omega(k_i,n_i)}\big(h+\lambda t\big)_{\eta_i}\prod_{j_i=i}^{k_i}\Big(\frac{\lambda_{ij_i}}{a_i+\lambda_i}\Big)^{n_{ij_i}}				
\frac{1}{n_{ij_i}!},
		\end{equation}}
where we have used \eqref{state_pr2} and \eqref{surv}.
\end{remark}

\subsubsection{Some particular cases for the survival function of random failure time}
Here, we present two representative examples of the survival function of the random failure time
 $T$ derived under specific assumptions on the survival distribution $\bar{F}_S(k)=\mathrm{Pr}\{S>k\}$, $k\ge 0$ of the random threshold $S$. Also, we plot the survival function for different choices of parameters corresponding to the following cases:

\noindent I.
 Suppose the random threshold $S$ follows Geometric$(p)$ distribution, that is, $\bar{F}_S(k)=(1-p)^k $, $0<p\le 1$.

On substituting $q=2, k_1=1$ and $k_2=2$ in \eqref{surv2}, we get
{\footnotesize \begin{align}\label{Geo}
\bar{F}_T(t)&=(1-\theta)^{\lambda t}\sum_{k=0}^{\infty}(1-p)^k\sum_{n_1+n_{21}+2n_{22}=k}	\sum_{h=0}^{\infty}\frac{\theta^h}{h!}\big(\lambda t\big)_h \big(h+\lambda t\big)_{n_1}\big(h+\lambda t\big)_{n_{21}+n_{22}}\nonumber\\ 
&\cdot\Big(\frac{a_1a_2}{(a_1+\lambda_1)(a_2+\lambda_{21}+\lambda_{22})}\Big)^{h+\lambda t}\frac{1}{n_1!n_{21}!n_{22}!} \frac{\lambda_1^{n_1}\lambda_{21}^{n_{21}} \lambda_{22}^{n_{22}}}{(a_1+\lambda_1)^{n_1}(a_2+\lambda_{21}+\lambda_{22})^{n_{21}+n_{22}}}.
\end{align}}

\noindent II. Let $S$ has Hypergeometric$(N,K,n)$ distribution, that is,
{\small \begin{align}\label{hyp}
		\bar{F}_S(k)=\frac{{n\choose k+1}{N-n \choose K-k-1}}{{N\choose K}}\,_3F_2(1, k+1-K, k+1-n; k+2, N+k+2-K-n; 1),
\end{align}}
where $N\in\mathbb{N}_0$, $K\in\{0,1,2,\dots,N\}$ and $n\in\{0,1,2,\dots,N\}$.\\
Now, on substituting $q=2, k_1=1$ and $k_2=2$ in \eqref{surv2}, and using \eqref{hyp}, we get
{\footnotesize \begin{align}\label{hypergeo}
		\bar{F}_T(t)&=(1-\theta)^{\lambda t}\sum_{k=0}^{\infty}\frac{{n\choose k+1}{N-n \choose K-k-1}}{{N\choose K}}\,_3F_2(1, k+1-K, k+1-n; k+2, N+k+2-K-n; 1)\nonumber\\
		&\ \ \cdot\sum_{n_1+n_{21}+2n_{22}=k}	\sum_{h=0}^{\infty}\frac{\theta^h}{h!}\big(\lambda t\big)_h \big(h+\lambda t\big)_{n_1}\big(h+\lambda t\big)_{n_{21}+n_{22}}\Big(\frac{a_1a_2}{(a_1+\lambda_1)(a_2+\lambda_{21}+\lambda_{22})}\Big)^{h+\lambda t}\nonumber\\ 
		&\hspace{6cm}\cdot\frac{1}{n_1!n_{21}!n_{22}!} \frac{\lambda_1^{n_1}\lambda_{21}^{n_{21}} \lambda_{22}^{n_{22}}}{(a_1+\lambda_1)^{n_1}(a_2+\lambda_{21}+\lambda_{22})^{n_{21}+n_{22}}}.
\end{align}}

\begin{figure}[H]
	\begin{center}
		\includegraphics[width=15cm]{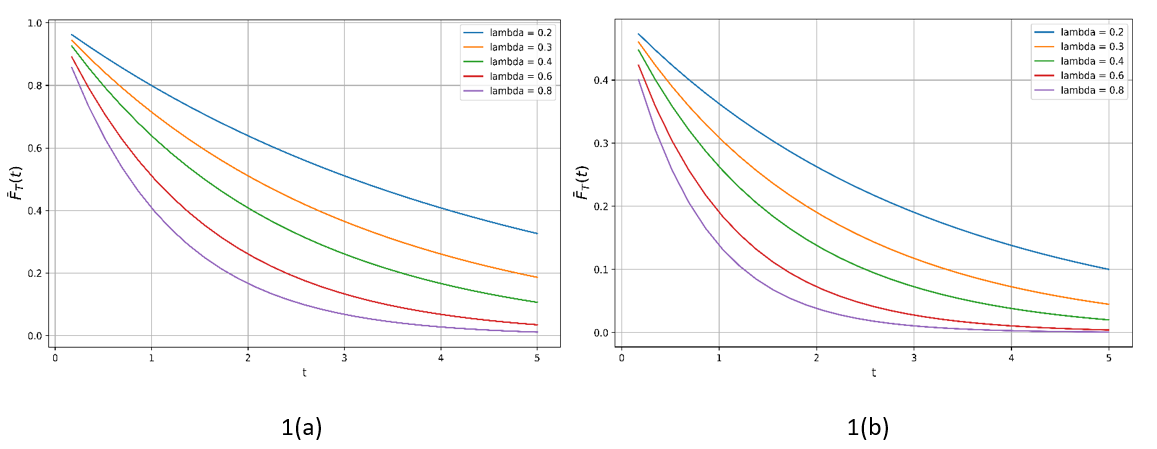}
		\vspace{-.3cm}\caption{Plots 1(a), 1(b) represent the survival function \eqref{Geo} and \eqref{hypergeo} for the parameter values $\theta=0.5$, $a_1=a_2=1$, $\lambda_1=0.5$, $\lambda_{21}=0.5$, $\lambda_{22}=0.5$ and: $p=0.5$ for 1(a) and $N=2$, $K=1$, $n=1$ for 1(b) respectively.}\label{slide1}
	\end{center}

\end{figure}

\begin{figure}
	\includegraphics[width=15cm]{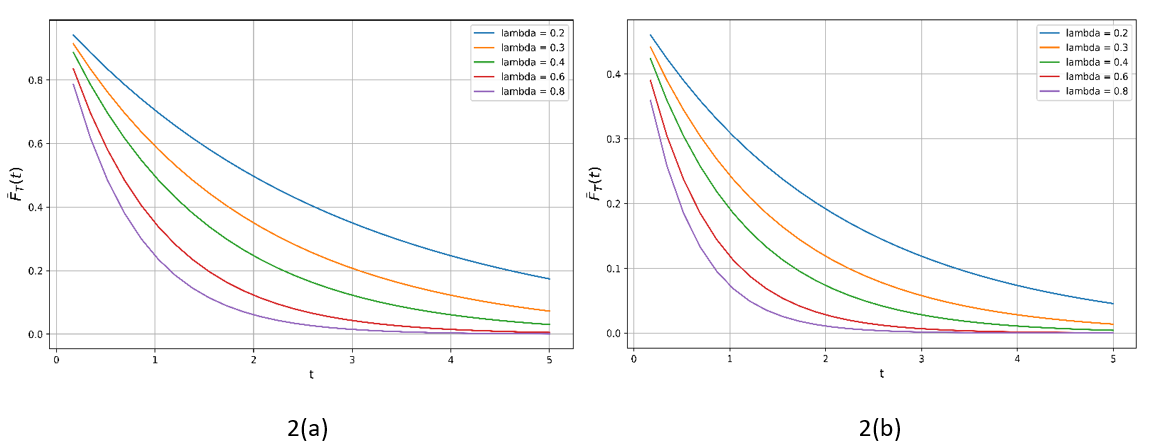}
	\vspace{-.3cm}\caption{Plots 2(a), 2(b) represent the survival function \eqref{Geo} and \eqref{hypergeo} for the parameter values $\theta=0.5$, $a_1=a_2=1$, $\lambda_1=1$, $\lambda_{21}=1$, $\lambda_{22}=1$ and: $p=0.5$ for 2(a) and $N=2$, $K=1$, $n=1$ for 2(b) respectively.}\label{slide2}
	
\end{figure}

\begin{figure}
	\includegraphics[width=15cm]{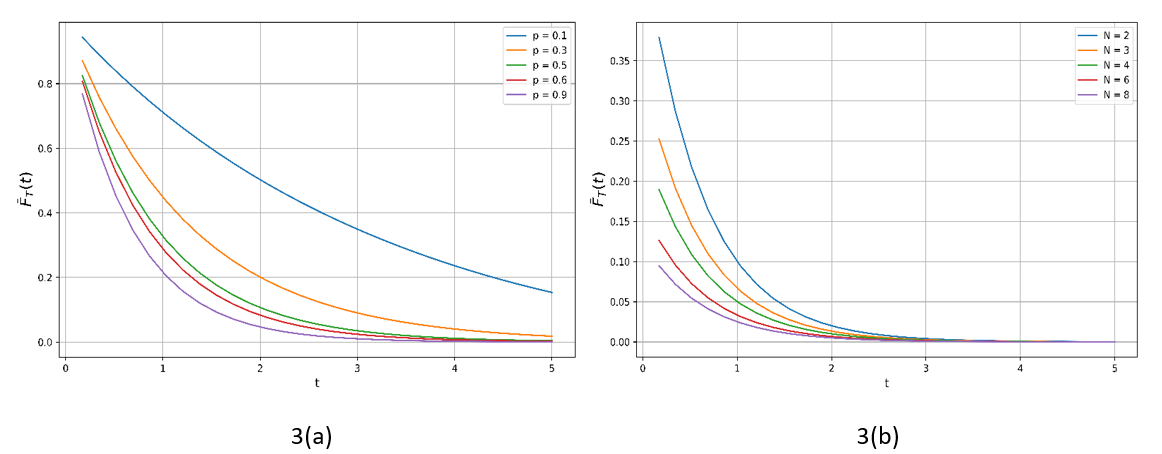}
	\vspace{-.3cm}\caption{Plots 3(a), 3(b) represents the survival function \eqref{Geo} and \eqref{hypergeo} for the parameter values $\theta=0.5$, $a_1=a_2=1$, $\lambda=1$, $\lambda_1=0.5$, $\lambda_{21}=0.5$, $\lambda_{22}=0.5$ and $K=1$, $n=1$ (for 3(b)) respectively.}\label{slide3}
	
\end{figure}

\begin{figure}[H]
	\includegraphics[width=15cm]{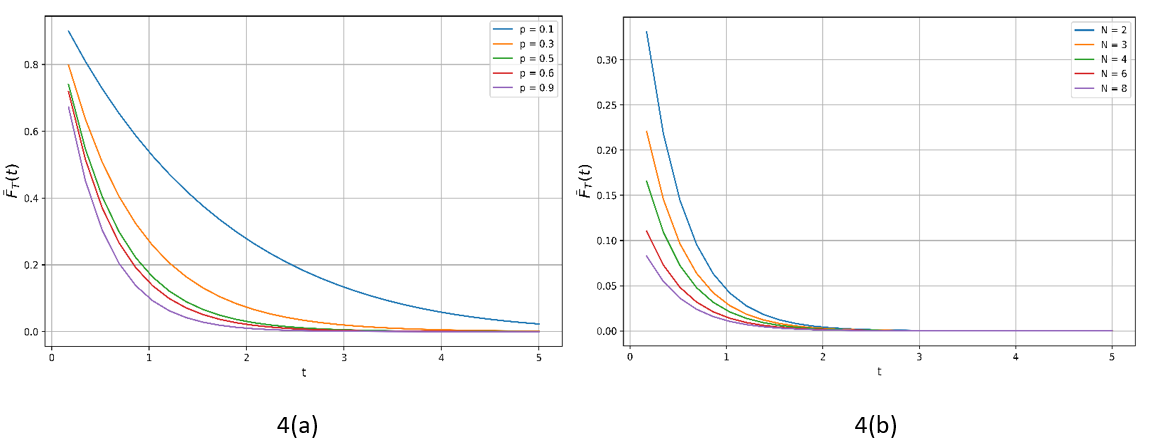}
	\vspace{-.3cm}\caption{Plots 4(a), 4(b) represents the survival function \eqref{Geo} and \eqref{hypergeo} for the parameter values $\theta=0.5$, $a_1=a_2=1$, $\lambda=1$, $\lambda_1=1$, $\lambda_{21}=1$, $\lambda_{22}=1$ and $K=1$, $n=1$ (for 4(b)) respectively.}\label{slide4}
	
\end{figure}

\begin{remark} 
	From the plots of the survival function corresponding to the two cases discussed above, we have the following observations:
	
	\noindent(i) From Figure \ref{slide1} and Figure \ref{slide2}, it is observed that the survival function $\bar{F}_T(\cdot)$ decreases with the increasing values of $\lambda$ for Case I and Case II.\\
	\noindent(ii) From Figure \ref{slide3} and Figure \ref{slide4}, it is observed that the survival function $\bar{F}_T(\cdot)$ decreases with the increasing values of $p$ in 3(a) and 4(a). Also, it decreases with with the increasing values of $N$ in 3(b) and 4(b).\\
\end{remark}

\end{document}